\documentclass[12pt]{amsart}

\usepackage{amssymb,amsmath,graphicx,color,textcomp, amsthm,bbm,bbold, enumerate,booktabs}
\usepackage{chngcntr}
\usepackage{apptools}
\usepackage{mathrsfs}
\usepackage{tikz}
\usetikzlibrary{trees}

\AtAppendix{\counterwithin{theorem}{section}}
\definecolor{Red}{cmyk}{0,1,1,0}

\definecolor{verde}{cmyk}{1,0,1,0}

\definecolor{loka}{cmyk}{.5,0,1,.5}
\definecolor{azul}{cmyk}{1,1,0,0}


\evensidemargin 0in \oddsidemargin 0in
\setlength{\headheight}{0cm} \setlength{\headsep}{50pt}
\setlength{\parindent}{0cm} \setlength{\textwidth}{6.5 in}
\setlength{\parskip}{.3cm}\setlength{\textheight}{21cm}
\setlength{\topmargin}{-1cm}\setlength{\linewidth}{10cm}
\setlength{\footskip}{1.5cm}

\numberwithin{equation}{section}




\newcommand{\be}{\begin{equation}}
\newcommand{\ee}{\end{equation}}

\newtheorem{theorem}{Theorem}

\newtheorem{definition}{Definition}

\begin{document}
\title{Mittag-Leffler functions and the truncated $\mathcal{V}$-fractional derivative}
\author{J. Vanterler da C. Sousa$^1$}
\address{$^1$ Department of Applied Mathematics, Institute of Mathematics,
 Statistics and Scientific Computation, University of Campinas --
UNICAMP, rua S\'ergio Buarque de Holanda 651,
13083--859, Campinas SP, Brazil\newline
e-mail: {\itshape \texttt{ra160908@ime.unicamp.br, capelas@ime.unicamp.br }}}

\author{E. Capelas de Oliveira$^1$}

\begin{abstract}We introduce a new derivative, the so-called truncated $\mathcal{V}$-fractional derivative for $\alpha$-differentiable functions through the six parameters truncated Mittag-Leffler function, which generalizes different fractional derivatives, recently introduced: conformable fractional derivatives, alternative fractional derivative, truncated alternative fractional derivative, $M$-fractional derivative and truncated $M$-fractional derivative.

This new truncated $\mathcal{V}$-fractional derivative satisfies properties of the entire order calculus, among them: linearity, product rule, quotient rule, function composition, and chain rule. Also, as in the case of the Caputo derivative, the derivative of a constant is zero. Since the six parameters Mittag-Leffler function is a generalization of Mittag-Leffler functions of one, two, three, four, and five parameters, we can extend some of the classic results of the entire order calculus, namely: Rolle's theorem, the mean value theorem and its extension. In addition, we present the theorem involving the law of exponents for derivatives and we calculated the truncated $\mathcal{V}$-fractional derivative of the two parameters Mittag-Leffler function.

Finally, we present the $\mathcal{V}$-fractional integral from which, as a natural consequence, new results appear as applications. Specifically, we generalize inverse property, the fundamental theorem of calculus, a theorem associated with classical integration by parts, and the mean value theorem for integrals. Also, we calculate the $\mathcal{V}$-fractional integral of the two parameters Mittag-Leffler function. Further, through the truncated $\mathcal{V}$-fractional derivative and the $\mathcal{V}$-fractional integral, we obtain a relation with the fractional derivative and integral in the Riemann-Liouville sense, in the case $0<\alpha<1$.
\vskip.5cm
\noindent
\emph{Keywords}: Mittag-Leffler functions, $\mathcal{V}$-fractional derivative, $\mathcal{V}$-fractional integral, Riemann-Liouville derivative, Riemann-Liouville integral.
\newline 
MSC 2010 subject classifications. 26A06; 26A24; 26A33; 26A39; 26A42
\end{abstract}
\maketitle

\section{Introduction}

The interest in the Mittag-Leffler functions increased due to their importance and applications in the no entire order calculus, the so-called fractional calculus, the fractional order differential equations and the Abel \cite{GOLL} type integral equations. In this sense, Caputo and Mainardi \cite{CAMI} have shown that Mittag-Leffler functions are present whenever fractional order derivatives are introduced into the constitutive equations associated with a body in the study of linear viscoelasticity. In 1980 \cite{RAYN}, Rabotnov also introduced, in linear viscoelasticity, Mittag-Leffler like functions; the connection for the use of fractional derivative operators in equations constituting linear viscoelasticity was established by Koeller \cite{KOR}.
	    
The simplest Mittag-Leffler function of type, $\mathbb{E}_{\alpha}(z)$ depends on a complex variable $z$ and a parameter $\alpha$ \cite{ML1}. As the exponential function is the solution of entire order differential equations with constant coefficients, the Mittag-Leffler function has an analogous role for solutions of no entire differential equations, and can be interpreted as a generalization of the exponential function.

Several generalizations of the Mittag-Leffler function have been proposed since 1903, when was introduced the Mittag-Leffler function \cite{ML1}. In 1905 Wiman \cite{ML2} generalized this function for two parameters, thus defining the two parameters Mittag-Leffler function. In 1971 Prabhakar \cite{ML3} introduced the so-called three parameters Mittag-Leffler function, a possible generalization of the two parameters Mittag-Leffler function. Shukla and Prajapati \cite{ML4} in 2007 introduced the four parameters Mittag-Leffler function. In 2012, Salim and Faraj \cite{ML6} defined a six parameters Mittag-Leffler function, this being a possible generalization of other Mittag-Leffler functions defined with less than six parameters. And recently, in 2013, Khan and Ahmed \cite{ML5} introduced the five parameters Mittag-Leffler function.

Several definitions of fractional derivatives of which we mention the fractional derivative: Riemann-Liouville, Caputo, Hadamard, Edérlyi-Kober, Katugampola, among others \cite{AHMJ,IP,UNT1}, have been introduced and consequently their applicability, of its importance for studies in the different areas of knowledge: Mathematics, Physics, Biology, Medicine and Engineering. On the other hand, accompanied by fractional derivatives, we have their respective fractional integrals \cite{AHMJ,IP,UNT}.

In 2014, Khalil \cite{KRHA}, introduced a new fractional integral and derivative that refers to the properties of the entire order calculus, called the conformable fractional derivative and the respective integral $\alpha$-fractional. Abdeljawad \cite{ABT}, presented a generalization for the conformable fractional derivative and for the $\alpha$-fractional derivative. In the same year, Katugampola \cite{UNT2} introduced the alternative fractional derivative and from the truncated exponential function, the truncated alternative fractional derivative, and consequently its respective integral $\alpha$-fractional. Recently, Sousa and Oliveira \cite{JEC1,JEC} introduced the $M$-fractional derivative and the truncated $M$-fractional derivative, which generalize the properties of the entire order calculus.

Motivated by the numerous applications involving the Mittag-Leffler functions in several areas, from the six parameters truncated Mittag-Leffler function and the gamma function, we introduced a new fractional derivative, the $\mathcal{V}$-fractional derivative, that generalizes the classical properties of the entire order calculus, as well as recent fractional derivatives $(\mbox{local derivative})$, of which we mention: the conformable fractional derivative, the alternative fractional derivative, the truncated alternative fractional derivative, the $M$-fractional derivative and the truncated $M$-fractional derivative, and $\mathcal{V}$-fractional derivative. We denote this new differential operator by $_{i}^{\rho }\mathcal{V}_{\gamma ,\beta ,\alpha }^{\delta ,p,q}$, where the parameter $\alpha$ , associated with the order of the derivative is such that $0<\alpha<1$, $\forall t>0$, where $\gamma ,\beta ,\rho ,\delta \in \mathbb{C}$ and $p,q>0$ such that ${Re}\left( \gamma \right) >0$, ${Re}\left( \beta \right) >0$, ${Re}\left( \rho \right) >0$, ${Re}\left( \delta \right) >0$, ${Re}\left( \gamma \right) +p\geq q$.

This article is organized as follows: in section 2 we present the Mittag-Leffler functions of one, two, three, four, five, and six parameters. In section 3, our main result, we present the concept of truncated $\mathcal{V}$-fractional derivative involving a six parameters truncated Mittag-Leffler function and the gamma function, as well as several theorems. In section 4, we present the respective fractional integral for which we demonstrate the inverse property. In section 5, we calculate the truncated $\mathcal{V}$-fractional derivative and integral of a two parameters Mittag-Leffler function. In section 6, we discuss two theorems that relate the truncated $\mathcal{V}$-fractional derivative and the $\mathcal{V}$-fractional integral with the derivative and fractional integral in the Riemann-Liouville sense. In section 7, we present the relation between the conformable fractional derivative, the alternative fractional derivative, the truncated alternative fractional derivative, the $M$-fractional derivative and the truncated $M$-fractional derivative, with the truncated $\mathcal{V}$-fractional derivative. Concluding remarks close the article.
\section{Prelimiaries}

As the exponential function is the solution of linear differential equations with constant coefficients, the Mittag-Leffler function is a solution of linear fractional differential equations with constant coefficients. Thus, the Mittag-Leffler function can be interpreted as a generalization of the exponential function. In this section we recover definitions of the Mittag-Leffler functions of: one, two, three, four, five and six parameters. We study the particular cases of the six parameters Mittag-Leffler function, important for the development of the article.

In 1903, Mittag-Leffler \cite{ML1} introduced the classic Mittag-Leffler function with only one
complex parameter.
\begin{definition}\label{def1}{\rm(One parameter Mittag-Leffler function)}. The Mittag-Leffler function is given by the series
\begin{equation}\label{A1}
\mathbb{E}_{\gamma }\left( z\right) =\overset{\infty }{\underset{k=0}{\sum }}\frac{z^{k}}{\Gamma \left( \gamma k+1\right) },
\end{equation}
with $\gamma \in \mathbb{C}$, $ {Re}\left( \gamma \right) >0$ and $\Gamma(z)$ is a gamma function, given by
\begin{equation}
\Gamma \left( z\right) =\int_{0}^{\infty }e^{-t}t^{z-1}dt, 
\end{equation}
$\mbox{Re}\left( z\right) >0$.
\end{definition}

In 1905, Wiman \cite{ML2} proposed and studied a generalization of the Mittag-Leffler function, the so-called two parameters Mittag-Leffler function.

\begin{definition}\label{def2}{\rm(Two parameters Mittag-Leffler function)}. The two parameters Mittag-Leffler function is given by the series
\begin{equation}\label{A2}
\mathbb{E}_{\gamma ,\beta }\left( z\right) =\overset{\infty }{\underset{k=0}{\sum }}%
\frac{z^{k}}{\Gamma \left( \gamma k+\beta \right) },
\end{equation}
with $\gamma ,\beta \in \mathbb{C}$, ${Re}\left( \gamma \right) >0$ and ${Re}\left( \beta
\right) >0$.
\end{definition}

In particular for $\beta=1$, we have $\mathbb{E}_{\gamma ,1}\left( z\right) =\mathbb{E}_{\gamma }\left( z\right) $, i.e; the Eq.(\ref{A1}). On the order hand, taking $\beta=\gamma=1$, we have $\mathbb{E}_{1,1}\left( z\right) =e^{z}$.

In 1971, Prabhakar \cite{ML3} introduced the three parameters Mittag-Leffler function.

\begin{definition}\label{def3}{\rm(Three parameters Mittag-Leffler function)}. Let $\gamma ,\beta ,\rho \in \mathbb{C}$ such that ${Re}\left( \gamma \right) >0$, ${Re}\left( \beta \right) >0$, $ {Re}\left( \rho \right) >0$, so
\begin{equation}\label{A3}
\mathbb{E}_{\gamma ,\beta }^{\rho }\left( z\right) =\overset{\infty }{\underset{k=0}{%
\sum }}\frac{\left( \rho \right) _{k}}{k!}\frac{z^{k}}{\Gamma \left( \gamma
k+\beta \right) },
\end{equation}
where $(\rho)_{k}$ is the Pochhammer symbol, defined by,
\begin{equation*}
\left( \rho \right) _{n}=\left\{ 
\begin{array}{ccc}
1, &  \text{for }n=0; \\ 
\alpha \left( \alpha +1\right)\cdots\left( \alpha +n-1\right), & \text{%
for }n\in \mathbb{N},
\end{array}
\right. 
\end{equation*}
which can be written in terms of the gamma function,
\begin{equation}\label{A4}
\left( \rho \right) _{k}=\rho \left( \rho +1\right) \cdot \cdot \cdot \left(
\rho +k-1\right) =\frac{\Gamma \left( \rho +k\right) }{\Gamma \left( \rho
\right) }.
\end{equation}
\end{definition}

In 2007, Shukla and Prajapati \cite{ML4}, introduced the four parameters Mittag-Leffler function, a generalization of Mittag-Leffler functions with one, two, and three parameters.

\begin{definition}\label{def4}{\rm(Four parameters Mittag-Leffler function)}. Let $\gamma ,\beta ,\rho \in \mathbb{C}$ and $q\in \left(0,1\right) \cup \mathbb{N}$ such that ${Re}\left( \gamma \right) >0$, ${Re}\left( \beta \right) >0$, ${Re}\left( \rho \right) >0$, so
\begin{equation}\label{A5}
\mathbb{E}_{\gamma ,\beta }^{\rho ,q}\left( z\right) =\overset{\infty }{\underset{k=0}%
{\sum }}\frac{\left( \rho \right) _{qk}}{k!}\frac{z^{k}}{\Gamma \left(
\gamma k+\beta \right) },
\end{equation}
where $(\rho)_{qk}$ is a generalization of the Pochhammer symbol, that is,
\begin{equation}\label{A6}
\left( \rho \right) _{qk}=\frac{\Gamma \left( \rho +qk\right) }{\Gamma\left( \rho \right) }.
\end{equation}
\end{definition}

Khan and Ahmed \cite{ML5}, introduced the five parameters Mittag-Leffler function.

\begin{definition}\label{def5}{\rm(Five parameters Mittag-Leffler function)}. Let $\gamma ,\beta ,\rho ,\delta \in \mathbb{C}$ and $q\in \left( 0,1\right) \cup \mathbb{N}$ such that ${Re}\left( \gamma \right) >0$, ${Re}\left( \beta\right) >0$, ${Re}\left( \rho \right) >0$, ${Re}\left(\delta \right) >0$, so
\begin{equation}\label{A7}
\mathbb{E}_{\gamma ,\beta ,\delta }^{\rho ,q}\left( z\right) =\overset{\infty }{%
\underset{k=0}{\sum }}\frac{\left( \rho \right) _{qk}}{\left( \delta \right)
_{k}}\frac{z^{k}}{\Gamma \left( \gamma k+\beta \right) },
\end{equation}
where $\left( \delta \right) _{k}$ and $\left( \rho \right) _{qk}$ are given by {\rm Eq.(\ref{A4})} and {\rm Eq.(\ref{A6})}.
\end{definition}

Finally, we present the six parameters Mittag-Leffler function as defined in 2012, by Salim and Faraj \cite{ML6}.

\begin{definition}\label{def6}{\rm(Six parameters Mittag-Leffler function)}. Let $\gamma ,\beta ,\rho ,\delta \in \mathbb{C}$ and $p,q>0$ such that ${Re}\left( \gamma \right) >0$, ${Re}\left( \beta \right) >0$, ${Re}\left( \rho \right) >0$, ${Re}\left( \delta \right) >0$, ${Re}\left( \gamma \right) +p\geq q$, so
\begin{equation}\label{A8}
\mathbb{E}_{\gamma ,\beta ,p}^{\rho ,\delta ,q}\left( z\right) =\overset{\infty }{%
\underset{k=0}{\sum }}\frac{\left( \rho \right) _{qk}}{\left( \delta \right)
_{pk}}\frac{z^{k}}{\Gamma \left( \gamma k+\beta \right) },
\end{equation}
where $\left( \delta \right) _{pk}$ and $\left( \rho \right) _{qk}$ are given by {\rm Eq.(\ref{A6})}.
\end{definition}

From Eq.(\ref{A8}), it is possible to obtain, as particular cases, the other Mittag-Leffler functions, namely:

\begin{enumerate}
\item For $p=1$, we get the five parameters Mittag-Leffler function, Eq.(\ref{A7}).

\item With $p=\delta=1$, we get the four parameters Mittag-Leffler function, Eq.(\ref{A5}).

\item In the case $p=\delta=q=1$, we get the three parameters Mittag-Leffler function, Eq.(\ref{A3}).

\item For $p=\delta=q=\rho=1$, we get the two parameters Mittag-Leffler function, Eq.(\ref{A2}).

\item With $p=\delta=q=\rho=\beta=1$, we obtain the one parameter Mittag-Leffler, Eq.(\ref{A1}).

\item Particularly, for $p=\delta=q=\rho=\beta=\gamma=1$, we recover the exponential function $e^{z}$.

\end{enumerate}

\section{Truncated $\mathcal{V}$-fractional derivative}

In this section, we define the truncated $\mathcal{V}$-fractional derivative using the six parameters truncated Mittag-Leffler function and the gamma function and we obtain several results similar to the results found in the classical calculus. From the definition, we present a theorem showing that the truncated $\mathcal{V}$-fractional derivative is linear, obeys the product rule and the composition of two $\alpha$-differentiable functions, the quotient rule and chain rule. It is also shown that the derivative of a constant is zero, as well as versions for Rolle's theorem, the mean value theorem, and an extension of the mean value theorem. The continuity of the truncated $\mathcal{V}$-fractional derivative is shown as in the entire order calculus, and in this sense, we present a theorem that refers to the law of exponents and the extension of the definition of the $n$ order truncated $\mathcal{V}$-fractional derivative.

Then, we begin with the definition of the six parameters truncated Mittag-Leffler function given by,
\begin{equation}\label{A9}
_{i}\mathbb{E}_{\gamma ,\beta ,p}^{\rho ,\delta ,q}\left( z\right) =\overset{i}{%
\underset{k=0}{\sum }}\frac{\left( \rho \right) _{qk}}{\left( \delta \right)
_{pk}}\frac{z^{k}}{\Gamma \left( \gamma k+\beta \right) },
\end{equation}
being $\gamma ,\beta ,\rho ,\delta \in \mathbb{C}$ and $p,q>0$ such that ${Re}\left( \gamma \right) >0$, ${Re}\left( \beta \right) >0$, ${Re}\left( \rho \right) >0$, ${Re}\left( \delta \right) >0$, ${Re}\left( \gamma \right) +p\geq q$ and $\left( \delta \right) _{pk}$, $\left( \rho \right) _{qk}$ given by Eq.(\ref{A6}).

From Eq.(\ref{A9}) and function $\Gamma(\beta)$, we introduce the following truncated function, denoted by $_{i}H^{\rho,\delta,q}_{\gamma,\beta,p}(z)$, by means of
\begin{equation}\label{A10}
_{i}H_{\gamma ,\beta ,p}^{\rho ,\delta ,q}\left( z\right) :=\Gamma \left( \beta
\right) \;_{i}\mathbb{E}_{\gamma ,\beta ,p}^{\rho ,\delta ,q}\left( z\right) =\Gamma
\left( \beta \right) \overset{i}{\underset{k=0}{\sum }}\frac{\left( \rho
\right) _{kq}}{\left( \delta \right) _{kp}}\frac{z^{k}}{\Gamma \left( \gamma
k+\beta \right) }.
\end{equation}

From Eq.(\ref{A10}), we define the truncated $\mathcal{V}$-fractional derivative that unifies other five fractional derivatives that refer to the classical properties of the integer order calculus.

In order to simplify notation, in this work, if the truncated $\mathcal{V}$-fractional derivative of order $\alpha$ according to Eq.(\ref{A11}) below of a function $f$ exists, we simply say that the $f$ function is $\alpha$-differentiable.

So, let's start with the following definition, which is a generalization of the usual definition of a derivative presented as a limit.

\begin{definition}\label{def7} Let $f:\left[ 0,\infty \right) \rightarrow \mathbb{R}$. For $0<\alpha <1$ the truncated $\mathcal{V}$-fractional derivative of $f$ of order $\alpha$, denoted by $_{i}^{\rho }\mathcal{V}_{\gamma ,\beta ,\alpha }^{\delta ,p,q}$, is defined by
\begin{equation}\label{A11}
_{i}^{\rho }\mathcal{V}_{\gamma ,\beta ,\alpha }^{\delta ,p,q}f\left( t\right) :=%
\underset{\epsilon \rightarrow 0}{\lim }\frac{f\left( t\;_{i}H_{\gamma ,\beta
,p}^{\rho ,\delta ,q}\left( \epsilon t^{-\alpha }\right) \right) -f\left(
t\right) }{\epsilon },
\end{equation}
for $\forall t>0$, $_{i}H_{\gamma ,\beta ,p}^{\rho ,\delta ,q}\left( \cdot\right) $ is a truncated function  as defined in {\rm Eq.(\ref{A10})} and being $\gamma ,\beta ,\rho ,\delta \in \mathbb{C}$ and $p,q>0$ such that ${Re}\left( \gamma \right) >0$, ${Re}\left( \beta \right) >0$, ${Re}\left( \rho \right) >0$, ${Re}\left( \delta \right) >0$, ${Re}\left( \gamma \right) +p\geq q$ and $\left( \delta \right) _{pk}$, $\left( \rho \right) _{qk}$ given by {\rm Eq.(\ref{A6})}. 
\end{definition}

Note that, if $f$ is differentiable in some $(0,a)$, $a>0$ and $\underset{t\rightarrow 0^{+}}{\lim }\left( _{i}^{\rho }\mathcal{V}_{\gamma ,\beta ,\alpha }^{\delta ,p,q}f\left( t\right) \right) $ exist, then we have
\begin{equation*}
_{i}^{\rho }\mathcal{V}_{\gamma ,\beta ,\alpha }^{\delta ,p,q}f\left(
0\right) =\underset{t\rightarrow 0^{+}}{\lim }\left( _{i}^{\rho }\mathcal{V}_{\gamma ,\beta ,\alpha }^{\delta ,p,q}f\left( t\right) \right).
\end{equation*}

\begin{theorem}\label{teo1} If the function $f:\left[ 0,\infty \right) \rightarrow \mathbb{R}$ is $\alpha $-differentiable for $t_{0}>0$, with $0<\alpha \leq 1$, then $f$ is continuous in $t_{0}$. 
\end{theorem}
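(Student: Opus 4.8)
The plan is to imitate the classical argument that differentiability implies continuity, adapted to the one-parameter curve along which the truncated $\mathcal V$-derivative is evaluated. The first ingredient is the elementary identity $\,{}_{i}H_{\gamma,\beta,p}^{\rho,\delta,q}(0)=1$: in the finite sum \eqref{A10} only the $k=0$ term survives at $z=0$, and since $(\rho)_{0}=(\delta)_{0}=1$ this term equals $\Gamma(\beta)\cdot\frac{1}{\Gamma(\beta)}=1$. Because ${}_{i}H_{\gamma,\beta,p}^{\rho,\delta,q}$ is a polynomial in $z$ it is continuous, so $\lim_{\epsilon\to0}{}_{i}H_{\gamma,\beta,p}^{\rho,\delta,q}(\epsilon t_{0}^{-\alpha})=1$, and therefore the perturbed argument satisfies $t_{0}\,{}_{i}H_{\gamma,\beta,p}^{\rho,\delta,q}(\epsilon t_{0}^{-\alpha})\to t_{0}$ as $\epsilon\to0$.

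Next I would write, for $\epsilon\neq0$,
\[
f\!\left(t_{0}\,{}_{i}H_{\gamma,\beta,p}^{\rho,\delta,q}(\epsilon t_{0}^{-\alpha})\right)-f(t_{0})
=\frac{f\!\left(t_{0}\,{}_{i}H_{\gamma,\beta,p}^{\rho,\delta,q}(\epsilon t_{0}^{-\alpha})\right)-f(t_{0})}{\epsilon}\,\cdot\,\epsilon .
\]
By the hypothesis that $f$ is $\alpha$-differentiable at $t_{0}$, the first factor on the right converges, as $\epsilon\to0$, to the finite number ${}_{i}^{\rho}\mathcal V_{\gamma,\beta,\alpha}^{\delta,p,q}f(t_{0})$ (it is exactly the difference quotient appearing in \eqref{A11} with $t=t_{0}$), while the second factor tends to $0$. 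Hence the product tends to $0$; that is, $\lim_{\epsilon\to0}f\!\left(t_{0}\,{}_{i}H_{\gamma,\beta,p}^{\rho,\delta,q}(\epsilon t_{0}^{-\alpha})\right)=f(t_{0})$.

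It then remains to upgrade this limit, taken along a curve, to honest continuity at $t_{0}$, and this is the only delicate step. Set $g(\epsilon):=t_{0}\,{}_{i}H_{\gamma,\beta,p}^{\rho,\delta,q}(\epsilon t_{0}^{-\alpha})$; differentiating \eqref{A10} shows that $g$ is a polynomial with $g(0)=t_{0}$ and $g'(0)=t_{0}^{1-\alpha}\,\Gamma(\beta)(\rho)_{q}\big/\big((\delta)_{p}\,\Gamma(\gamma+\beta)\big)$, which is nonzero under the standing hypotheses on the parameters (the gamma values being finite and nonvanishing, and the first-order coefficient being genuinely present as soon as $i\geq1$). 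Consequently $g'$ keeps a constant sign near $\epsilon=0$, so $g$ is strictly monotone there and maps a neighbourhood of $0$ bijectively and bicontinuously onto a neighbourhood of $t_{0}$; writing $\epsilon=\epsilon(t)$ for the local inverse, one has $\epsilon(t)\to0$ as $t\to t_{0}$, whence $\lim_{t\to t_{0}}f(t)=\lim_{\epsilon\to0}f(g(\epsilon))=f(t_{0})$, which is the asserted continuity. The main obstacle is precisely this change of variables: \eqref{A11} controls $f$ only along the family $g(\epsilon)$, so one must invoke the non-degeneracy $g'(0)\neq0$ (equivalently $i\geq1$) to guarantee that this family sweeps out a full neighbourhood of $t_{0}$; once that is in hand, the remaining steps are routine, and the case $\alpha=1$ requires no change since $t_{0}^{-1}$ is well defined for $t_{0}>0$.
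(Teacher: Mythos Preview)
Your core argument---writing the increment as the difference quotient times $\epsilon$ and letting $\epsilon\to 0$---is exactly what the paper does, and the paper's proof stops there: after obtaining $\lim_{\epsilon\to0}f\bigl(t_{0}\,{}_{i}H_{\gamma,\beta,p}^{\rho,\delta,q}(\epsilon t_{0}^{-\alpha})\bigr)=f(t_{0})$ it simply declares $f$ continuous at $t_{0}$.

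You go further than the paper. You correctly point out that the limit just obtained is only along the one-parameter family $g(\epsilon)=t_{0}\,{}_{i}H_{\gamma,\beta,p}^{\rho,\delta,q}(\epsilon t_{0}^{-\alpha})$, and you then use the non-degeneracy $g'(0)\neq 0$ (which needs $i\geq 1$, a point the paper leaves implicit) to show that $g$ maps a neighbourhood of $0$ bijectively onto a neighbourhood of $t_{0}$, thereby upgrading the curve-wise limit to honest continuity $\lim_{t\to t_{0}}f(t)=f(t_{0})$. This last step is absent from the paper's argument but is exactly what is needed to justify the conclusion as stated; in that sense your proof is more complete than the original, while following the same underlying idea.
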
 
\begin{proof} In fact, consider the following identity
\begin{equation} \label{A12}
f\left( t_{0}\Gamma \left( \beta \right) \;_{i}\mathbb{E}_{\gamma ,\beta ,p}^{\rho ,\delta ,q}\left( \varepsilon t_{0}^{-\alpha }\right) \right) -f\left( t_{0}\right) =\left( \frac{f\left( t_{0}\Gamma \left( \beta \right)\;_{i}\mathbb{E}_{\gamma ,\beta ,p}^{\rho ,\delta ,q}\left( \varepsilon t_{0}^{-\alpha }\right) \right) -f\left( t_{0}\right) }{\varepsilon }\right) \varepsilon . 
\end{equation}

Taking the limit $\varepsilon \rightarrow 0$ on both sides of {\rm{Eq}.(\ref{A12})}, we have
\begin{eqnarray*}
\underset{\varepsilon \rightarrow 0}{\lim }f\left( t_{0}\Gamma \left( \beta \right)\;_{i}\mathbb{E}_{\gamma ,\beta ,p}^{\rho ,\delta ,q}\left( \varepsilon t_{0}^{-\alpha}\right) \right) -f\left( t_{0}\right)  &=&\underset{\varepsilon \rightarrow0}{\lim }\left( \frac{f\left( t_{0}\Gamma \left( \beta \right)\;_{i}\mathbb{E}_{\gamma ,\beta ,p}^{\rho
,\delta ,q}\left( \varepsilon t_{0}^{-\alpha }\right) \right) -f\left(
t_{0}\right) }{\varepsilon }\right) \underset{\varepsilon \rightarrow 0}{\lim }\varepsilon  \\
&=&_{i}^{\rho }\mathcal{V}_{\gamma ,\beta ,\alpha }^{\delta ,p,q}f\left(t\right) \underset{\varepsilon \rightarrow 0}{\lim }\varepsilon  \\&=&0.
\end{eqnarray*}

Then, $f$ is continuous in $t_{0}$.
\end{proof}

Introducing the series representation for the truncated function $_{i}H_{\gamma ,\beta,p}^{\rho ,\delta ,q}\left( \cdot \right) $, we have
\begin{equation}\label{A13}
f\left( t\;\Gamma \left( \beta \right) _{i}\mathbb{E}_{\gamma ,\beta
,p}^{\rho ,\delta ,q}\left( \varepsilon t^{-\alpha }\right) \right) =f\left(t\Gamma \left( \beta \right) \overset{i}{\underset{k=0}{\sum }}\frac{\left(\rho \right) _{kq}}{\left( \delta \right) _{kp}}\frac{\left( \varepsilon
t^{-\alpha }\right) ^{k}}{\Gamma \left( \gamma k+\beta \right) }\right) ,
\end{equation}
then, taking the limit $\varepsilon \rightarrow 0$ on both sides of Eq.(\ref{A13}) and as $f$ is continuous, we get
\begin{eqnarray*}
\underset{\varepsilon \rightarrow 0}{\lim }f\left( t\;\Gamma \left( \beta\right) _{i}\mathbb{E}_{\gamma ,\beta ,p}^{\rho ,\delta ,q}\left(
\varepsilon t^{-\alpha }\right) \right)  &=&\underset{\varepsilon
\rightarrow 0}{\lim }f\left( t\Gamma \left( \beta \right) \overset{i}{
\underset{k=0}{\sum }}\frac{\left( \rho \right) _{kq}}{\left( \delta \right) _{kp}}\frac{\left( \varepsilon t^{-\alpha }\right) ^{k}}{\Gamma \left(\gamma k+\beta \right) }\right)   \notag \\
&=&f\left( t\Gamma \left( \beta \right) \underset{\varepsilon \rightarrow 0}{\lim }\overset{i}{\underset{k=0}{\sum }}\frac{\left( \rho \right) _{kq}}{\left( \delta \right) _{kp}}\frac{\left( \varepsilon t^{-\alpha }\right) ^{k}}{\Gamma \left( \gamma k+\beta \right) }\right) .
\end{eqnarray*}

Besides that, we have
\begin{eqnarray}\label{A14}
_{i}\mathbb{E}_{\gamma ,\beta ,p}^{\rho ,\delta ,q}\left( \varepsilon
t^{-\alpha }\right)  &=&\overset{i}{\underset{k=0}{\sum }}\frac{\left( \rho
\right) _{kq}}{\left( \delta \right) _{kp}}\frac{\left( \varepsilon
t^{-\alpha }\right) ^{k}}{\Gamma \left( \gamma k+\beta \right) }  \notag \\
&=&\frac{1}{\Gamma \left( \beta \right) }+\frac{\Gamma \left( \rho +q\right)
\Gamma \left( \delta \right) \varepsilon t^{1-\alpha }}{\Gamma \left( \rho
\right) \Gamma \left( \delta +p\right) \Gamma \left( \gamma +\beta \right) }+%
\frac{\Gamma \left( \rho +2q\right) \Gamma \left( \delta \right) \left(
\varepsilon t^{-\alpha }\right) ^{2}}{\Gamma \left( \rho \right) \Gamma
\left( \delta +2p\right) \Gamma \left( 2\gamma +\beta \right) }  \notag \\
&&+\frac{\Gamma \left( \rho +3q\right) \Gamma \left( \delta \right) \left(
\varepsilon t^{-\alpha }\right) ^{3}}{\Gamma \left( \rho \right) \Gamma
\left( \delta +3p\right) \Gamma \left( 3\gamma +\beta \right) }+\cdot \cdot
\cdot +\frac{\Gamma \left( \rho +iq\right) \Gamma \left( \delta \right)
\left( \varepsilon t^{-\alpha }\right) ^{i}}{\Gamma \left( \rho \right)
\Gamma \left( \delta +ip\right) \Gamma \left( i\gamma +\beta \right) }.\notag \\
\end{eqnarray}

Taking the limit $\varepsilon \rightarrow 0$ on both sides of {\rm Eq.(\ref{A14})}, we have
\begin{equation*}
\underset{\varepsilon \rightarrow 0}{\lim }\overset{i}{\underset{k=0}{\sum }}%
\frac{\left( \rho \right) _{kq}}{\left( \delta \right) _{kp}}\frac{\left(
\varepsilon t^{-\alpha }\right) ^{k}}{\Gamma \left( \gamma k+\beta \right) }=%
\frac{1}{\Gamma \left( \beta \right) },
\end{equation*}
then, we conclude that
\begin{equation*}
\underset{\varepsilon \rightarrow 0}{\lim }f\left( t\;\Gamma \left( \beta
\right) _{i}\mathbb{E}_{\gamma ,\beta ,p}^{\rho ,\delta ,q}\left(
\varepsilon t^{-\alpha }\right) \right) =f\left( t\right) .
\end{equation*}

Here, we present the theorem that includes the main classical properties of entire order calculus. For the chain rule, will be check by an example, as we will see below. Let's do here, just the demonstration of the chain rule. For other items, the reasoning is the same as used in Theorem \ref{teo2} found in Sousa and Oliveira \cite{JEC1}.

\begin{theorem}\label{teo2} Let $0<\alpha\leq 1$, $a,b\in\mathbb{R}$, $\gamma ,\beta ,\rho ,\delta \in \mathbb{C}$ and $p,q>0$ such that ${Re}\left( \gamma \right) >0$, ${Re}\left( \beta \right) >0$, ${Re}\left( \rho \right) >0$, ${Re}\left( \delta \right) >0$, ${Re}\left( \gamma \right) +p\geq q$ and $f,g$ $\alpha$-differentiable, for $t>0$. Then,
\begin{enumerate}
\item $_{i}^{\rho }\mathcal{V}_{\gamma ,\beta ,\alpha }^{\delta ,p,q}\left( af+bg\right)
\left( t\right) =a\left( _{i}^{\rho }\mathcal{V}_{\gamma ,\beta ,\alpha }^{\delta
,p,q}f\left( t\right) \right) +b\left( _{i}^{\rho }\mathcal{V}_{\gamma ,\beta ,\alpha
}^{\delta ,p,q}g\left( t\right) \right) $

\item $_{i}^{\rho }\mathcal{V}_{\gamma ,\beta ,\alpha }^{\delta ,p,q}\left( f\cdot g\right)
\left( t\right) =f\left( t\right) \left( _{i}^{\rho }\mathcal{V}_{\gamma ,\beta
,\alpha }^{\delta ,p,q}g\left( t\right) \right) +g\left( t\right) \left(
_{i}^{\rho }\mathcal{V}_{\gamma ,\beta ,\alpha }^{\delta ,p,q}f\left( t\right) \right) 
$

\item $_{i}^{\rho }\mathcal{V}_{\gamma ,\beta ,\alpha }^{\delta ,p,q}\left( \frac{f}{g}%
\right) \left( t\right) =\displaystyle\frac{g\left( t\right) \left( _{i}^{\rho }\mathcal{V}_{\gamma
,\beta ,\alpha }^{\delta ,p,q}f\left( t\right) \right) -f\left( t\right)
\left( _{i}^{\rho }\mathcal{V}_{\gamma ,\beta ,\alpha }^{\delta ,p,q}g\left( t\right)
\right) }{\left[ g\left( t\right) \right] ^{2}}$

\item $_{i}^{\rho }\mathcal{V}_{\gamma ,\beta ,\alpha }^{\delta ,p,q}\left( c\right) =0$, where $f(t)=c$ is a constant.

\item {\rm\text{(Chain rule)}} If $f$ is differentiable, then $_{i}^{\rho }\mathcal{V}_{\gamma ,\beta ,\alpha }^{\delta ,p,q}f\left( t\right) =\displaystyle\frac{t^{1-\alpha }\Gamma \left( \beta \right) \left( \rho \right) _{q}}{\Gamma\left( \gamma +\beta \right) \left( \delta \right) _{p}}\frac{df\left(
t\right) }{dt},$ being $(\rho)_{q}$ and $(\delta)_{p}$ the symbol of Pochhammer, given by {\rm Eq.(\ref{A4})}.

\begin{proof} From {\rm\text{Eq}.\rm(\ref{A14})}, we have
\begin{equation*}
t\;\Gamma \left( \beta \right) _{i}\mathbb{E}_{\gamma ,\beta ,p}^{\rho
,\delta ,q}\left( \varepsilon t^{-\alpha }\right) =t+\frac{\Gamma \left(
\beta \right) }{\Gamma \left( \gamma +\beta \right) }\frac{\left( \rho
\right) _{q}}{\left( \delta \right) _{p}}\varepsilon t^{1-\alpha }+O\left(
\varepsilon ^{2}\right) ,
\end{equation*}
with $\left( \rho \right) _{q}=\displaystyle\frac{\Gamma \left( \rho +q\right) }{\Gamma \left( \rho \right) }$ and $\displaystyle\left( \delta \right) _{p}=\frac{\Gamma \left( \delta +p\right) }{\Gamma \left( \delta \right) }$.

Introducing the following change,
\begin{equation*}
h=\varepsilon t^{1-\alpha }\left( \frac{\Gamma \left( \beta \right) \left(
\rho \right) _{q}}{\Gamma \left( \gamma +\beta \right) \left( \delta \right)
_{p}}+O\left( \varepsilon \right) \right) \Rightarrow \varepsilon =\frac{h}{%
t^{1-\alpha }\left( \displaystyle\frac{\Gamma \left( \beta \right) \left( \rho \right)
_{q}}{\Gamma \left( \gamma +\beta \right) \left( \delta \right) _{p}}%
+O\left( \varepsilon \right) \right) },
\end{equation*}%
we conclude that 
\begin{eqnarray*}
_{i}^{\rho }\mathcal{V}_{\gamma ,\beta ,\alpha }^{\delta ,p,q}f\left( t\right)  &=&%
\underset{\varepsilon \rightarrow 0}{\lim }\frac{\displaystyle\frac{f\left(
t+h\right) -f\left( t\right) }{ht^{\alpha -1}}}{\displaystyle\frac{\Gamma \left( \beta
\right) \left( \rho \right) _{q}}{\Gamma \left( \gamma +\beta \right) \left(
\delta \right) _{p}}\left( 1+\displaystyle\frac{\Gamma \left( \gamma +\beta \right)
\left( \delta \right) _{p}}{\Gamma \left( \beta \right) \left( \rho \right)
_{q}}+O\left( \varepsilon \right) \right) } \\
&=&\frac{t^{1-\alpha }}{\displaystyle\frac{\Gamma \left( \beta \right) \left( \rho
\right) _{q}}{\Gamma \left( \gamma +\beta \right) \left( \delta \right) _{p}}%
}\underset{\varepsilon \rightarrow 0}{\lim }\frac{\displaystyle\frac{f\left(
t+h\right) -f\left( t\right) }{h}}{1+\displaystyle\frac{\Gamma \left( \gamma +\beta
\right) \left( \delta \right) _{p}}{\Gamma \left( \beta \right) \left( \rho
\right) _{q}}+O\left( \varepsilon \right) } \\
&=&\frac{t^{1-\alpha }\Gamma \left( \beta \right) \left( \rho \right) _{q}}{%
\Gamma \left( \gamma +\beta \right) \left( \delta \right) _{p}}\frac{%
df\left( t\right) }{dt},
\end{eqnarray*}
with $t>0$.
\end{proof}

\item $_{i}^{\rho }\mathcal{V}_{\gamma ,\beta ,\alpha }^{\delta ,p,q}\left( f\circ
g\right) \left( t\right) =f^{\prime }\left( g\left( t\right) \right) \left(_{i}^{\rho }\mathcal{V}_{\gamma ,\beta ,\alpha }^{\delta ,p,q}g\left(t\right) \right) $, for $f$ differentiable in $g(t)$.
\end{enumerate}
\end{theorem}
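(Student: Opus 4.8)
The plan is to prove the scalar-factor formula (the item labelled ``Chain rule'') first and to treat it as the engine from which linearity, the product rule, the quotient rule and the constant case all follow by reduction to the corresponding rules for the ordinary derivative. Concretely, once I know
\[
{}_{i}^{\rho }\mathcal{V}_{\gamma ,\beta ,\alpha }^{\delta ,p,q}f(t)
= C(t)\,\frac{df(t)}{dt},\qquad C(t):=\frac{t^{1-\alpha}\,\Gamma(\beta)\,(\rho)_{q}}{\Gamma(\gamma+\beta)\,(\delta)_{p}},
\]
the operator is nothing but multiplication by the fixed factor $C(t)$ composed with $d/dt$, and every one of the remaining identities becomes a one-line algebraic consequence of a classical differentiation rule.

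To establish the scalar-factor formula I would start from Eq.(\ref{A14}). Because the six parameters Mittag-Leffler function is \emph{truncated}, the sum is finite, so $t\,\Gamma(\beta)\,{}_{i}\mathbb{E}_{\gamma,\beta,p}^{\rho,\delta,q}(\varepsilon t^{-\alpha})$ is a genuine polynomial in $\varepsilon$ and I may write, with no convergence issue,
\[
t\,\Gamma(\beta)\,{}_{i}\mathbb{E}_{\gamma,\beta,p}^{\rho,\delta,q}(\varepsilon t^{-\alpha})
= t + C(t)\,\varepsilon + O(\varepsilon^{2}).
\]
Setting $h=\varepsilon\bigl(C(t)+O(\varepsilon)\bigr)$ makes the argument of $f$ in Definition \ref{def7} exactly $t+h$, and since $h\to0$ and $h/\varepsilon\to C(t)$ as $\varepsilon\to0$, multiplying and dividing the difference quotient by $h$ and invoking differentiability of $f$ yields
\[
{}_{i}^{\rho }\mathcal{V}_{\gamma ,\beta ,\alpha }^{\delta ,p,q}f(t)
= \lim_{\varepsilon\to0}\frac{f(t+h)-f(t)}{h}\cdot\frac{h}{\varepsilon}
= f'(t)\,C(t),
\]
which is the desired formula.

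With this in hand the first four items are immediate: linearity follows from $(af+bg)'=af'+bg'$; the product rule from $(fg)'=f'g+fg'$ after distributing $C(t)$ and recognising $C(t)f'$ and $C(t)g'$ as the $\mathcal{V}$-derivatives of $f$ and $g$; the quotient rule from $(f/g)'=(f'g-fg')/g^{2}$ in the same way; and the constant case from $\tfrac{d}{dt}c=0$, which gives $C(t)\cdot0=0$. I expect the main obstacle to be the rigorous change of variables $\varepsilon\mapsto h$ inside the scalar-factor formula: one must check that $C(t)+O(\varepsilon)$ stays bounded away from $0$ for small $\varepsilon$ so that $h\to0$ exactly when $\varepsilon\to0$, and that the $O(\varepsilon)$ remainder truly drops out of the limit rather than perturbing the prefactor $C(t)$. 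A secondary point worth flagging is that the reductions for items 1--4 use ordinary differentiability of $f,g$ (the hypothesis that powers the classical rules), so one should record that the stated $\alpha$-differentiability is being used here in its strong sense; everything beyond this is purely formal.
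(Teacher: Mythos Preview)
Your proof of item~5 is essentially the paper's: both expand the truncated Mittag-Leffler argument as $t+C(t)\varepsilon+O(\varepsilon^{2})$, make the same change of variable $h=\varepsilon(C(t)+O(\varepsilon))$, and pass to the classical difference quotient. No substantive difference there.

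Where you diverge is in items~1--4 (and item~6, which you do not mention). The paper does \emph{not} derive these from item~5; it refers to \cite{JEC1}, where the analogous results for the $M$-fractional derivative are proved directly from the limit definition (add-and-subtract arguments for the product rule, etc.). Your plan instead bootstraps everything from item~5: once ${}_{i}^{\rho}\mathcal{V}_{\gamma,\beta,\alpha}^{\delta,p,q}f(t)=C(t)f'(t)$, items~1--4 and~6 become restatements of the classical rules for $d/dt$. This is cleaner and shorter.

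The cost, which you flag but do not resolve, is a hypothesis mismatch. Item~5 carries the \emph{extra} assumption that $f$ is (ordinarily) differentiable; the theorem's standing hypothesis for items~1--4 is only $\alpha$-differentiability. Your reductions therefore prove items~1--4 only for ordinarily differentiable $f,g$, which is formally weaker than what is stated. The paper's direct-from-definition route avoids this, since linearity and the product/quotient rules can be established from the limit in Definition~\ref{def7} without ever invoking $f'$. If you want to keep your strategy, you would need to argue that for $t>0$ the existence of the $\mathcal{V}$-limit forces the ordinary derivative to exist (run your change of variable $h\leftrightarrow\varepsilon$ backwards, using that $C(t)\neq0$); that would close the gap and make your approach a genuine alternative. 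As written, it is a correct proof of a slightly weaker statement.
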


Considering $p=q=\delta=\rho=\beta=1$, in item 5 of the Theorem \ref{teo2}, we have
$_{i}^{1}\mathcal{V}_{\gamma ,1,\alpha }^{1,1,1}f\left( t\right) =\displaystyle\frac{t^{1-\alpha }}{\Gamma \left( \gamma +1\right) }\frac{df\left( t\right) }{dt}$, i.e; the chain rule for $M$-fractional derivative and truncated $M$-fractional derivative. For $\gamma=1$, we have $_{i}^{1}V_{1,1,\alpha }^{1,1,1}f\left( t\right) =t^{1-\alpha }\displaystyle\frac{df\left( t\right) }{dt}$, that is, the chain rule for the conformable fractional derivative, the alternative fractional derivative, and the truncated alternative fractional derivative.

Let us omit the proof of the Theorem \ref{teo3} and Theorem \ref{teo4}. The proof follows the chain rule, as seen in item 5 of the Theorem \ref{teo2}.

\begin{theorem}\label{teo3} Let $0<\alpha\leq 1$, $\gamma ,\beta ,\rho ,\delta \in \mathbb{C}$ and $p,q>0$ such that ${Re}\left( \gamma \right) >0$, ${Re}\left( \beta \right) >0$, ${Re}\left( \rho \right) >0$, ${Re}\left( \delta \right) >0$, ${Re}\left( \gamma \right) +p\geq q$ and $f,g$ $\alpha$-differentiable, for $t>0$. Then, we have the following results
\begin{enumerate}

\item $_{i}^{\rho }\mathcal{V}_{\gamma ,\beta ,\alpha }^{\delta ,p,q}\left(
e^{at}\right) =\displaystyle\frac{\Gamma \left( \beta \right) \left( \rho \right) _{q}}{%
\Gamma \left( \gamma +\beta \right) \left( \delta \right) _{p}}t^{1-\alpha
}a e^{at}$

\item $_{i}^{\rho }\mathcal{V}_{\gamma ,\beta ,\alpha }^{\delta ,p,q}\sin \left(
at\right) =\displaystyle\frac{\Gamma \left( \beta \right) \left( \rho \right) _{q}}{%
\Gamma \left( \gamma +\beta \right) \left( \delta \right) _{p}}t^{1-\alpha
}a\cos \left( at\right) $

\item $_{i}^{\rho }\mathcal{V}_{\gamma ,\beta ,\alpha }^{\delta ,p,q}\cos \left(
at\right) =-\displaystyle\frac{\Gamma \left( \beta \right) \left( \rho \right) _{q}}{%
\Gamma \left( \gamma +\beta \right) \left( \delta \right) _{p}}t^{1-\alpha
}a\sin \left( at\right) $

\item $_{i}^{\rho }\mathcal{V}_{\gamma ,\beta ,\alpha }^{\delta ,p,q}\left(
t^{a}\right) =\displaystyle\frac{\Gamma \left( \beta \right) \left( \rho \right) _{q}}{%
\Gamma \left( \gamma +\beta \right) \left( \delta \right) _{p}}at^{1-\alpha }$

\item $_{i}^{\rho }\mathcal{V}_{\gamma ,\beta ,\alpha }^{\delta ,p,q}\left( \frac{%
t^{\alpha }}{\alpha }\right) =\displaystyle\frac{\Gamma \left( \beta \right) \left( \rho
\right) _{q}}{\Gamma \left( \gamma +\beta \right) \left( \delta \right) _{p}}$
\end{enumerate}
\end{theorem}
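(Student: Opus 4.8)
The plan is to derive all five identities as immediate consequences of the chain rule established in item~5 of Theorem~\ref{teo2}. That item asserts that whenever $f$ is (ordinarily) differentiable one has
\begin{equation*}
_{i}^{\rho }\mathcal{V}_{\gamma ,\beta ,\alpha }^{\delta ,p,q}f\left( t\right) =\frac{t^{1-\alpha }\,\Gamma \left( \beta \right) \left( \rho \right) _{q}}{\Gamma\left( \gamma +\beta \right) \left( \delta \right) _{p}}\,\frac{df\left( t\right) }{dt},\qquad t>0 .
\end{equation*}
Each of the functions $e^{at}$, $\sin(at)$, $\cos(at)$, $t^{a}$ and $t^{\alpha }/\alpha $ is differentiable on $(0,\infty )$, hence $\alpha $-differentiable there, so the displayed formula applies to each of them with no hypotheses beyond those of the theorem; the conditions on $\mathrm{Re}(\gamma ),\mathrm{Re}(\beta ),\mathrm{Re}(\rho ),\mathrm{Re}(\delta )$ only serve to guarantee that the scalar $\Gamma(\beta)(\rho)_q/\big(\Gamma(\gamma+\beta)(\delta)_p\big)$ is well defined and nonzero.

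Then I would simply substitute the elementary derivatives and simplify the power of $t$. For item~1, $\tfrac{d}{dt}e^{at}=ae^{at}$; for items~2 and~3, $\tfrac{d}{dt}\sin(at)=a\cos(at)$ and $\tfrac{d}{dt}\cos(at)=-a\sin(at)$; for item~4, $\tfrac{d}{dt}t^{a}=at^{a-1}$, which is multiplied by the prefactor $t^{1-\alpha}$ and the powers of $t$ are collected; and for item~5, $\tfrac{d}{dt}\!\left(t^{\alpha}/\alpha\right)=t^{\alpha-1}$, so that $t^{1-\alpha}\cdot t^{\alpha-1}=1$ and the derivative reduces to the bare constant $\Gamma(\beta)(\rho)_q/\big(\Gamma(\gamma+\beta)(\delta)_p\big)$. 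This last identity is the natural analogue of the fact that the ordinary derivative of a linear function is constant, and serves as a convenient consistency check on the whole construction.

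There is essentially no obstacle once item~5 of Theorem~\ref{teo2} is in hand: the argument is a one-line substitution in each case, exactly as the paper signals (``The proof follows the chain rule''). The only points deserving a word of care are that, for $t>0$ and real $a$, all the elementary derivatives above are valid, and that the scalar prefactor never vanishes, so no degeneracy can occur. If one preferred a self-contained proof that does not invoke item~5, one could instead start from Definition~\ref{def7}, insert the first-order expansion
\[
 t\,\Gamma (\beta )\,_{i}\mathbb{E}_{\gamma ,\beta ,p}^{\rho ,\delta ,q}(\varepsilon t^{-\alpha })=t+\frac{\Gamma (\beta )}{\Gamma (\gamma +\beta )}\frac{(\rho )_{q}}{(\delta )_{p}}\,\varepsilon t^{1-\alpha }+O(\varepsilon ^{2})
\]
coming from Eq.~(\ref{A14}), and expand $f$ to first order in $\varepsilon$; but this merely re-derives the chain-rule formula along the way, so citing Theorem~\ref{teo2} is the efficient route.
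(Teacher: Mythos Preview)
Your proposal is correct and follows exactly the route the paper indicates: the paper omits the proof and states that it ``follows the chain rule, as seen in item 5 of the Theorem \ref{teo2},'' which is precisely what you do by substituting the elementary derivatives into the formula $_{i}^{\rho }\mathcal{V}_{\gamma ,\beta ,\alpha }^{\delta ,p,q}f(t)=\frac{t^{1-\alpha}\Gamma(\beta)(\rho)_q}{\Gamma(\gamma+\beta)(\delta)_p}f'(t)$. (Note that your computation for item~4 correctly yields the exponent $a-\alpha$ on $t$, so the exponent $1-\alpha$ printed in the statement appears to be a typographical slip in the paper.)
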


\begin{theorem}\label{teo4} Let $0<\alpha\leq 1$, $\gamma ,\beta ,\rho ,\delta \in \mathbb{C}$ and $p,q>0$ such that ${Re}\left( \gamma \right) >0$, ${Re}\left( \beta \right) >0$, ${Re}\left( \rho \right) >0$, ${Re}\left( \delta \right) >0$, ${Re}\left( \gamma \right) +p\geq q$ and $f,g$ $\alpha$-differentiable, for $t>0$. Then, we have the following results 
\begin{enumerate}
\item $_{i}^{\rho }\mathcal{V}_{\gamma ,\beta ,\alpha }^{\delta ,p,q}\sin
\left( \frac{t^{\alpha }}{\alpha }\right) =\displaystyle\frac{\Gamma \left( \beta \right)
\left( \rho \right) _{q}}{\Gamma \left( \gamma +\beta \right) \left( \delta
\right) _{p}}\cos \left( \frac{t^{\alpha }}{\alpha }\right) $

\item $_{i}^{\rho }\mathcal{V}_{\gamma ,\beta ,\alpha }^{\delta ,p,q}\cos
\left( \frac{t^{\alpha }}{\alpha }\right) =-\displaystyle\frac{\Gamma \left( \beta
\right) \left( \rho \right) _{q}}{\Gamma \left( \gamma +\beta \right) \left(
\delta \right) _{p}}\sin \left( \frac{t^{\alpha }}{\alpha }\right) $

\item $_{i}^{\rho }\mathcal{V}_{\gamma ,\beta ,\alpha }^{\delta ,p,q}\left( e^{%
\frac{t^{\alpha }}{\alpha }}\right) =\displaystyle\frac{\Gamma \left( \beta \right)
\left( \rho \right) _{q}}{\Gamma \left( \gamma +\beta \right) \left( \delta
\right) _{p}}e^{\frac{t^{\alpha }}{\alpha }}$
\end{enumerate}
\end{theorem}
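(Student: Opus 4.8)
The plan is to deduce all three identities from the chain rule established in item~5 of Theorem~\ref{teo2}, namely that for a differentiable $f$ one has
\begin{equation*}
{}_{i}^{\rho }\mathcal{V}_{\gamma ,\beta ,\alpha }^{\delta ,p,q}f\left( t\right) =\frac{t^{1-\alpha }\Gamma \left( \beta \right) \left( \rho \right) _{q}}{\Gamma \left( \gamma +\beta \right) \left( \delta \right) _{p}}\,\frac{df\left( t\right) }{dt}, \qquad t>0 .
\end{equation*}
For each of the three functions in the statement the inner argument is $g(t)=t^{\alpha }/\alpha $, which is differentiable on $(0,\infty )$ with $g'(t)=t^{\alpha -1}$, so the composites $\sin g$, $\cos g$ and $e^{g}$ are differentiable there and the chain rule applies. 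The first step is therefore to record the ordinary derivatives: $\frac{d}{dt}\sin (t^{\alpha }/\alpha )=t^{\alpha -1}\cos (t^{\alpha }/\alpha )$, $\frac{d}{dt}\cos (t^{\alpha }/\alpha )=-t^{\alpha -1}\sin (t^{\alpha }/\alpha )$, and $\frac{d}{dt}e^{t^{\alpha }/\alpha }=t^{\alpha -1}e^{t^{\alpha }/\alpha }$.

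The second step is simply to substitute these into the displayed formula. The key (and only) simplification is the cancellation $t^{1-\alpha }\cdot t^{\alpha -1}=1$, after which the prefactor $\dfrac{\Gamma (\beta )(\rho )_{q}}{\Gamma (\gamma +\beta )(\delta )_{p}}$ survives unchanged and multiplies, respectively, $\cos (t^{\alpha }/\alpha )$, $-\sin (t^{\alpha }/\alpha )$ and $e^{t^{\alpha }/\alpha }$. This yields exactly items~1--3. Equivalently, one could invoke item~6 of Theorem~\ref{teo2} (the composition rule $_{i}^{\rho }\mathcal{V}_{\gamma ,\beta ,\alpha }^{\delta ,p,q}(f\circ g)(t)=f'(g(t))\,{}_{i}^{\rho }\mathcal{V}_{\gamma ,\beta ,\alpha }^{\delta ,p,q}g(t)$) together with item~5 of Theorem~\ref{teo3}, which gives $_{i}^{\rho }\mathcal{V}_{\gamma ,\beta ,\alpha }^{\delta ,p,q}(t^{\alpha }/\alpha )=\dfrac{\Gamma (\beta )(\rho )_{q}}{\Gamma (\gamma +\beta )(\delta )_{p}}$; this routes around the explicit $t^{1-\alpha }$ cancellation and produces the same three expressions.

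There is essentially no genuine obstacle here: the content of the theorem is a direct corollary of the chain rule for $_{i}^{\rho }\mathcal{V}_{\gamma ,\beta ,\alpha }^{\delta ,p,q}$, which is why the authors announce that the proof is omitted. The only points worth stating carefully are that each composite is differentiable on the open half-line $t>0$ (so that item~5 of Theorem~\ref{teo2} is legitimately applicable), and that the factor $t^{\alpha -1}$ arising from $g'$ is precisely what is needed to absorb the $t^{1-\alpha }$ in the chain-rule formula, leaving a genuinely $\alpha$-independent prefactor. As a consistency check one may specialize $p=q=\delta =\rho =\beta =1$ (and then $\gamma =1$) to recover the corresponding formulas for the $M$-fractional and conformable derivatives, mirroring the remark following Theorem~\ref{teo2}.
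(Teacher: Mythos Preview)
Your proposal is correct and follows exactly the route the paper indicates: the authors omit the proof and state that it follows from the chain rule in item~5 of Theorem~\ref{teo2}, which is precisely what you carry out (computing the ordinary derivatives of $\sin(t^{\alpha}/\alpha)$, $\cos(t^{\alpha}/\alpha)$, $e^{t^{\alpha}/\alpha}$ and noting the cancellation $t^{1-\alpha}\cdot t^{\alpha-1}=1$). There is nothing to add.
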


Theorem \ref{der} bellow proves that commutativity property depends on the type of fractional operator.

\begin{theorem}\label{der} Let $_{i}^{\rho }\mathcal{V}_{\gamma ,\beta ,\alpha }^{\delta ,p,q}f(t)$ and $_{i}^{\rho }\mathcal{V}_{\gamma ,\beta ,\mu }^{\delta ,p,q}f(t)$ truncated $\mathcal{V}$-fractional derivative of the order $\alpha$ {\rm($0<\alpha<1$)} and {\rm$\mu$ ($0<\mu<1$)}, respectively. So, we have
\begin{equation}
_{i}^{\rho }\mathcal{V}_{\gamma ,\beta ,\alpha }^{\delta ,p,q}\left( _{i}^{\rho
}\mathcal{V}_{\gamma ,\beta ,\mu }^{\delta ,p,q}f\left( t\right) \right) =\frac{\Gamma
\left( \beta \right) \left( \rho \right) _{q}}{\Gamma \left( \gamma +\beta
\right) \left( \delta \right) _{p}}\left( \left( 1-\mu \right) \left(
_{i}^{\rho }\mathcal{V}_{\gamma ,\beta ,\alpha +\mu }^{\delta ,p,q}f\left( t\right)
\right) +t\left( _{i}^{\rho }\mathcal{V}_{\gamma ,\beta ,\alpha +\mu }^{\delta
,p,q}f^{\prime }\left( t\right) \right) \right) .
\end{equation}
\end{theorem}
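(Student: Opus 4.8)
The plan is to reduce everything to item~5 of Theorem~\ref{teo2} (the chain rule), which expresses the truncated $\mathcal{V}$-fractional derivative of a differentiable function as a scalar multiple of its ordinary derivative. For brevity write
\[
C:=\frac{\Gamma\left( \beta \right) \left( \rho \right) _{q}}{\Gamma \left( \gamma +\beta \right) \left( \delta \right) _{p}},
\]
so that for every differentiable $g$ and every admissible order $\nu$ one has $_{i}^{\rho }\mathcal{V}_{\gamma ,\beta ,\nu }^{\delta ,p,q}g\left( t\right)=C\,t^{1-\nu}g'\left( t\right)$. Applying this with $g=f$ and order $\mu$ gives $_{i}^{\rho }\mathcal{V}_{\gamma ,\beta ,\mu }^{\delta ,p,q}f\left( t\right)=C\,t^{1-\mu}f'\left( t\right)$.

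Next I would apply the chain-rule identity a second time, now to the function $t\mapsto C\,t^{1-\mu}f'\left( t\right)$; here one uses that $f$ is twice differentiable, so that this function is itself differentiable and Theorem~\ref{teo2}(5) applies. By the ordinary product rule $\dfrac{d}{dt}\big(C\,t^{1-\mu}f'\left( t\right)\big)=C\big((1-\mu)t^{-\mu}f'\left( t\right)+t^{1-\mu}f''\left( t\right)\big)$, hence
\[
_{i}^{\rho }\mathcal{V}_{\gamma ,\beta ,\alpha }^{\delta ,p,q}\Big(\,_{i}^{\rho }\mathcal{V}_{\gamma ,\beta ,\mu }^{\delta ,p,q}f\left( t\right)\Big)=C\,t^{1-\alpha}\cdot C\big((1-\mu)t^{-\mu}f'\left( t\right)+t^{1-\mu}f''\left( t\right)\big)=C^{2}\big((1-\mu)t^{1-\alpha-\mu}f'\left( t\right)+t^{2-\alpha-\mu}f''\left( t\right)\big).
\]

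Finally I would expand the right-hand side of the claimed identity using the same chain-rule formula with order $\alpha+\mu$: namely $_{i}^{\rho }\mathcal{V}_{\gamma ,\beta ,\alpha +\mu }^{\delta ,p,q}f\left( t\right)=C\,t^{1-\alpha-\mu}f'\left( t\right)$ and $_{i}^{\rho }\mathcal{V}_{\gamma ,\beta ,\alpha +\mu }^{\delta ,p,q}f'\left( t\right)=C\,t^{1-\alpha-\mu}f''\left( t\right)$. Substituting these into $C\big((1-\mu)\,_{i}^{\rho }\mathcal{V}_{\gamma ,\beta ,\alpha +\mu }^{\delta ,p,q}f\left( t\right)+t\,_{i}^{\rho }\mathcal{V}_{\gamma ,\beta ,\alpha +\mu }^{\delta ,p,q}f'\left( t\right)\big)$ yields exactly $C^{2}\big((1-\mu)t^{1-\alpha-\mu}f'\left( t\right)+t^{2-\alpha-\mu}f''\left( t\right)\big)$, which coincides with the expression computed above, and the identity follows. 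There is no genuine obstacle here beyond careful bookkeeping of the constant $C$ and the powers of $t$; the only point deserving a word of care is the regularity hypothesis — since the chain-rule identity of Theorem~\ref{teo2} is stated for differentiable functions, iterating it requires assuming $f\in C^{2}(0,\infty)$ (equivalently, that $f'$ is differentiable on the relevant interval), and this assumption should be recorded alongside the statement of the theorem.
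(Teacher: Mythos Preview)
Your argument is correct and follows essentially the same route as the paper: apply the chain-rule identity of Theorem~\ref{teo2}(5) to the inner operator, differentiate the resulting product $C\,t^{1-\mu}f'(t)$ by the ordinary product rule, and then recognize the terms as $_{i}^{\rho }\mathcal{V}_{\gamma ,\beta ,\alpha +\mu }^{\delta ,p,q}f$ and $_{i}^{\rho }\mathcal{V}_{\gamma ,\beta ,\alpha +\mu }^{\delta ,p,q}f'$ via the same chain-rule formula at order $\alpha+\mu$. Your observation that the iteration tacitly requires $f$ to be twice differentiable is a fair point that the paper leaves implicit.
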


\begin{proof}

In fact using the chain rule in item 5 of {\rm Theorem \ref{teo2}}, we have
\begin{eqnarray}\label{ZE1}
_{i}^{\rho }\mathcal{V}_{\gamma ,\beta ,\alpha }^{\delta ,p,q}\left( _{i}^{\rho
}\mathcal{V}_{\gamma ,\beta ,\mu }^{\delta ,p,q}f\left( t\right) \right)  &=&\;_{i}^{\rho }\mathcal{V}_{\gamma ,\beta ,\alpha }^{\delta ,p,q} \left( \frac{%
\Gamma \left( \beta \right) \left( \rho \right) _{q}}{\Gamma \left( \gamma
+\beta \right) \left( \delta \right) _{p}}t^{1-\mu }f^{\prime }\left(
t\right) \right)   \notag \\
&=&t^{1-\alpha }\left( \frac{\Gamma \left( \beta \right) \left( \rho \right)
_{q}}{\Gamma \left( \gamma +\beta \right) \left( \delta \right) _{p}}\right)
^{2}\frac{d}{dt}\left( t^{1-\mu }f^{\prime }\left( t\right) \right)   \notag
\\
&=&\left( \frac{\Gamma \left( \beta \right) \left( \rho \right) _{q}}{\Gamma
\left( \gamma +\beta \right) \left( \delta \right) _{p}}\right) ^{2}\left(
\left( 1-\mu \right) t^{1-\alpha -\mu }f^{\prime }\left( t\right)
+t^{2-\alpha -\mu }f^{\prime \prime }\left( t\right) \right).\notag
\\
\end{eqnarray}

By  \rm {Definition \ref{def1}}, we have
\begin{equation}\label{ZE2}
_{i}^{\rho }\mathcal{V}_{\gamma ,\beta ,\alpha +\mu }^{\delta ,p,q}f\left( t\right) =%
\frac{\Gamma \left( \beta \right) \left( \rho \right) _{q}}{\Gamma \left(
\gamma +\beta \right) \left( \delta \right) _{p}}\left( t^{1-\alpha -\mu
}f^{\prime }\left( t\right) \right).
\end{equation}

So, replacing {\rm Eq.(\ref{ZE2})} in {\rm Eq.(\ref{ZE1})}, we conclude that
\begin{eqnarray*}
_{i}^{\rho }\mathcal{V}_{\gamma ,\beta ,\alpha }^{\delta ,p,q}\left( _{i}^{\rho
}\mathcal{V}_{\gamma ,\beta ,\mu }^{\delta ,p,q}f\left( t\right) \right)  &=&\frac{%
\Gamma \left( \beta \right) \left( \rho \right) _{q}}{\Gamma \left( \gamma
+\beta \right) \left( \delta \right) _{p}}\left( 
\begin{array}{c}
\left( \displaystyle\frac{\Gamma \left( \beta \right) \left( \rho \right) _{q}}{\Gamma
\left( \gamma +\beta \right) \left( \delta \right) _{p}}\right) \left( 1-\mu
\right) t^{1-\alpha -\mu }f^{\prime }\left( t\right)  \\ 
+\left( \displaystyle\frac{\Gamma \left( \beta \right) \left( \rho \right) _{q}}{\Gamma
\left( \gamma +\beta \right) \left( \delta \right) _{p}}\right) t^{2-\alpha
-\mu }f^{\prime \prime }\left( t\right) 
\end{array}\right)   \notag \\
&=&\frac{\Gamma \left( \beta \right) \left( \rho \right) _{q}}{\Gamma \left(
\gamma +\beta \right) \left( \delta \right) _{p}}\left( \left( 1-\mu \right)
\left( _{i}^{\rho }\mathcal{V}_{\gamma ,\beta ,\alpha +\mu }^{\delta ,p,q}f\left(
t\right) \right) +t\left( _{i}^{\rho }\mathcal{V}_{\gamma ,\beta ,\alpha +\mu
}^{\delta ,p,q}f^{\prime }\left( t\right) \right) \right). 
\end{eqnarray*}
\end{proof}

From Theorem \ref{der}, follow $_{i}^{\rho }\mathcal{V}_{\gamma ,\beta ,\alpha }^{\delta ,p,q}\left( _{i}^{\rho }\mathcal{V}_{\gamma ,\beta ,\mu }^{\delta ,p,q}f\left( t\right) \right) \neq \left(_{i}^{\rho }\mathcal{V}_{\gamma ,\beta ,\alpha +\mu }^{\delta ,p,q}f\left( t\right)
\right)$.

The proof of Rolle's theorem, next, will be omitted,  because it is analogous, as the $M$-fractional derivative \cite{JEC1,JEC}.

\begin{theorem} {\rm\text{(Rolle's theorem for fractional $\alpha$-differentiable functions)}}
Let $a>0$, and $f:\left[ a,b\right] \rightarrow \mathbb{R}$ a function with the properties:
\begin{enumerate}
\item $f$ is continuous in $[a,b]$.
\item $f$ is $\alpha$-differentiable in $(a,b)$ for some $\alpha\in(0,1)$.
\item  $f(a)=f(b)$.
\end{enumerate}
Then, $\exists c\in(a,b)$, such that $_{i}^{\rho }\mathcal{V}_{\gamma ,\beta ,\alpha }^{\delta ,p,q}f(c)=0$, with $\gamma ,\beta ,\rho ,\delta \in \mathbb{C}$ and $p,q>0$ such that ${Re}\left( \gamma \right) >0$, ${Re}\left( \beta \right) >0$, ${Re}\left( \rho \right) >0$, ${Re}\left( \delta \right) >0$ and ${Re}\left( \gamma \right) +p\geq q$.
\end{theorem}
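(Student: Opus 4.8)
The plan is to reduce the statement to the classical Rolle theorem by exploiting the chain-rule formula from item 5 of Theorem~\ref{teo2}. First I would apply the classical Rolle theorem to $f$: since $f$ is continuous on $[a,b]$, differentiable on $(a,b)$ (note that $\alpha$-differentiability on $(a,b)$ together with the hypothesis $a>0$ forces ordinary differentiability there, because for $t>0$ the factor multiplying $f'(t)$ in the chain rule is a nonzero constant times $t^{1-\alpha}$, hence invertible), and $f(a)=f(b)$, there exists $c\in(a,b)$ with $f'(c)=0$.

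Next I would invoke the chain rule in item 5 of Theorem~\ref{teo2}, which gives, for every $t>0$,
\begin{equation*}
_{i}^{\rho }\mathcal{V}_{\gamma ,\beta ,\alpha }^{\delta ,p,q}f\left( t\right) =\frac{t^{1-\alpha }\Gamma \left( \beta \right) \left( \rho \right) _{q}}{\Gamma\left( \gamma +\beta \right) \left( \delta \right) _{p}}\,f'\left( t\right).
\end{equation*}
Evaluating this identity at $t=c$ and using $f'(c)=0$, the right-hand side vanishes (the prefactor $\tfrac{c^{1-\alpha}\Gamma(\beta)(\rho)_q}{\Gamma(\gamma+\beta)(\delta)_p}$ is a finite number, and $c>0$ since $a>0$), so $_{i}^{\rho }\mathcal{V}_{\gamma ,\beta ,\alpha }^{\delta ,p,q}f(c)=0$, which is exactly the claim. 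The parameter constraints ${Re}(\gamma)>0$, ${Re}(\beta)>0$, ${Re}(\rho)>0$, ${Re}(\delta)>0$, ${Re}(\gamma)+p\ge q$ are precisely what is needed for the truncated function $_{i}H_{\gamma,\beta,p}^{\rho,\delta,q}$, hence the derivative $_{i}^{\rho }\mathcal{V}_{\gamma ,\beta ,\alpha }^{\delta ,p,q}f$, to be well defined, and they are carried along unchanged.

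The only genuinely delicate point — the main obstacle, such as it is — is justifying that "$\alpha$-differentiable on $(a,b)$" may be replaced by "differentiable on $(a,b)$" so that classical Rolle applies; this is where the hypothesis $a>0$ is essential, since it guarantees $t^{1-\alpha}\neq 0$ throughout $(a,b)$ and lets one solve the chain-rule identity for $f'(t)$ in terms of $_{i}^{\rho }\mathcal{V}_{\gamma ,\beta ,\alpha }^{\delta ,p,q}f(t)$. Everything else is a direct transcription of the classical argument, which is why the authors note the proof is "analogous, as the $M$-fractional derivative."
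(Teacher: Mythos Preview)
Your reduction to classical Rolle is a legitimate strategy, but it differs from the route the paper (implicitly, via the reference to the $M$-fractional case) takes. In the $M$-fractional and conformable papers the proof is carried out directly from the limit definition: one picks an interior extremum $c$ of the continuous function $f$, observes that the two one-sided limits
\[
\lim_{\varepsilon\to 0^{\pm}}\frac{f\!\left(c\,_{i}H_{\gamma,\beta,p}^{\rho,\delta,q}(\varepsilon c^{-\alpha})\right)-f(c)}{\varepsilon}
\]
have opposite signs (because the argument $c\,_{i}H(\varepsilon c^{-\alpha})$ moves to the right of $c$ for one sign of $\varepsilon$ and to the left for the other), and concludes that the common value is $0$. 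That argument never needs ordinary differentiability of $f$.

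Your approach is more economical once the equivalence ``$\alpha$-differentiable at $t>0$ $\Leftrightarrow$ differentiable at $t$'' is in hand, but the way you justify that equivalence is circular as written: item~5 of Theorem~\ref{teo2} is stated under the hypothesis that $f$ is differentiable, so you cannot ``solve the chain-rule identity for $f'(t)$'' to \emph{obtain} differentiability. What actually works is to rerun the substitution from the proof of item~5 (the change of variable $h=\varepsilon t^{1-\alpha}\bigl(\tfrac{\Gamma(\beta)(\rho)_q}{\Gamma(\gamma+\beta)(\delta)_p}+O(\varepsilon)\bigr)$) directly on the limit defining $_{i}^{\rho}\mathcal{V}_{\gamma,\beta,\alpha}^{\delta,p,q}f(t)$; this shows the $\alpha$-derivative limit and the ordinary difference quotient limit are proportional by the nonzero factor $\tfrac{\Gamma(\beta)(\rho)_q}{\Gamma(\gamma+\beta)(\delta)_p}\,t^{1-\alpha}$, so existence of one forces existence of the other. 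With that fix your argument is complete; the paper's direct extremum argument simply sidesteps this issue altogether.
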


\begin{theorem} {\rm\text{(Mean-value theorem for fractional $\alpha$-differentiable functions)}}
Let $a>0$ and $f:\left[ a,b\right] \rightarrow \mathbb{R}$ a function with the properties:
\begin{enumerate}
\item $f$ is continuous in $[a,b]$.
\item $f$ is $\alpha$-differentiable in $(a,b)$ for some $\alpha\in(0,1)$.
\end{enumerate}
Then, $\exists c\in(a,b)$, such that
\begin{equation*}
_{i}^{\rho }\mathcal{V}_{\gamma ,\beta ,\alpha }^{\delta ,p,q}f(c) =\frac{f\left( b\right) -f\left(a\right) }{\displaystyle\frac{b^{\alpha }}{\alpha }-\frac{a^{\alpha }}{\alpha }},
\end{equation*}
with $\gamma ,\beta ,\rho ,\delta \in \mathbb{C}$ and $p,q>0$ such that ${Re}\left( \gamma \right) >0$, ${Re}\left( \beta \right) >0$, ${Re}\left( \rho \right) >0$, ${Re}\left( \delta \right) >0$ and ${Re}\left( \gamma \right) +p\geq q$.
\end{theorem}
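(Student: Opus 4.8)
The plan is to obtain this from Rolle's theorem for fractional $\alpha$-differentiable functions, in the same way the classical mean value theorem is obtained from the classical Rolle theorem, but with the map $t\mapsto t^\alpha/\alpha$ playing the role that $t\mapsto t$ plays in ordinary calculus: it is the natural ``linear'' object here because item 5 of Theorem \ref{teo3} shows that $_{i}^{\rho}\mathcal{V}_{\gamma,\beta,\alpha}^{\delta,p,q}(t^\alpha/\alpha)$ is a constant.

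First I would introduce the auxiliary function
\[
g(t):=f(t)-f(a)-\frac{f(b)-f(a)}{\dfrac{b^\alpha}{\alpha}-\dfrac{a^\alpha}{\alpha}}\left(\frac{t^\alpha}{\alpha}-\frac{a^\alpha}{\alpha}\right),\qquad t\in[a,b],
\]
and verify that it meets the three hypotheses of the Rolle theorem just stated. Continuity of $g$ on $[a,b]$ follows from the continuity of $f$ and of $t\mapsto t^\alpha$; $\alpha$-differentiability of $g$ on $(a,b)$ follows from the hypothesis on $f$, from the $\alpha$-differentiability of $t\mapsto t^\alpha/\alpha$ recorded in item 5 of Theorem \ref{teo3}, and from linearity (item 1 of Theorem \ref{teo2}); and $g(a)=g(b)=0$ by direct substitution, the equality $g(b)=0$ being exactly the cancellation that forces the choice of the coefficient multiplying $(t^\alpha/\alpha-a^\alpha/\alpha)$.

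Next, Rolle's theorem supplies a point $c\in(a,b)$ with $_{i}^{\rho}\mathcal{V}_{\gamma,\beta,\alpha}^{\delta,p,q}g(c)=0$. Expanding this with the linearity and constant rules of Theorem \ref{teo2} (items 1 and 4) and using item 5 of Theorem \ref{teo3} to evaluate $_{i}^{\rho}\mathcal{V}_{\gamma,\beta,\alpha}^{\delta,p,q}(t^\alpha/\alpha)$, the two constant summands of $g$ contribute nothing and one is left with a linear equation in $_{i}^{\rho}\mathcal{V}_{\gamma,\beta,\alpha}^{\delta,p,q}f(c)$; solving it gives the value of the derivative at $c$ asserted in the statement (carrying, in the general case, the multiplicative factor $\Gamma(\beta)(\rho)_q/[\Gamma(\gamma+\beta)(\delta)_p]$ produced by differentiating $t^\alpha/\alpha$, which reduces to $1$ in the conformable and classical specializations).

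I do not expect a genuine obstacle: the proof is a transcription of the classical argument once Rolle's theorem and the identity $_{i}^{\rho}\mathcal{V}_{\gamma,\beta,\alpha}^{\delta,p,q}(t^\alpha/\alpha)=\Gamma(\beta)(\rho)_q/[\Gamma(\gamma+\beta)(\delta)_p]$ are in hand. The only point requiring care is bookkeeping this last constant and normalizing $g$ consistently with the identity one wants to reach; everything else (the two substitutions for $g(a)$ and $g(b)$, the appeal to linearity) is routine.
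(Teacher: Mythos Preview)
Your approach is exactly the paper's: construct an auxiliary function, check the hypotheses of the fractional Rolle theorem, and read off the derivative at the critical point. The only difference is the normalization of the coefficient in front of $\bigl(\tfrac{t^\alpha}{\alpha}-\tfrac{a^\alpha}{\alpha}\bigr)$: the paper multiplies your coefficient by $\Gamma(\gamma+\beta)(\delta)_p/[\Gamma(\beta)(\rho)_q]$, so that when item~5 of Theorem~\ref{teo3} is applied this constant cancels and the displayed formula emerges with no residual factor. You are right to flag the bookkeeping issue: with your $g$ one has $g(a)=g(b)=0$ cleanly but the conclusion carries the extra factor $\Gamma(\beta)(\rho)_q/[\Gamma(\gamma+\beta)(\delta)_p]$, whereas with the paper's $g$ that factor cancels in the derivative step but then $g(b)=\bigl(1-\Gamma(\gamma+\beta)(\delta)_p/[\Gamma(\beta)(\rho)_q]\bigr)\bigl(f(b)-f(a)\bigr)$, which is not zero in general. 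So the discrepancy you noticed is real, not just cosmetic; your version is the internally consistent one, and the statement as printed should carry that constant.
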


\begin{proof}
Considered the function
\begin{equation}
g\left( x\right) =f\left( x\right) -f\left( a\right) -\frac{{\Gamma \left(
\gamma +\beta \right) }\left( \delta \right) _{p}}{{\Gamma \left( \beta
\right) }\left( \rho \right) _{q}}\left( \displaystyle\frac{f\left( b\right)
-f\left( a\right) }{\displaystyle\frac{1}{\alpha }b^{\alpha }-\displaystyle%
\frac{1}{\alpha }a^{\alpha }}\right) \left( \frac{1}{\alpha }x^{\alpha }-%
\frac{1}{\alpha }a^{\alpha }\right) .  \label{E}
\end{equation}

The function $g$ satisfies the conditions of Rolle's theorem. Then there is $c\in(a,b)$ such that $_{i}^{\rho }V_{\gamma ,\beta ,\alpha }^{\delta ,p,q}f\left( c\right) =0$. Taking the truncated $\mathcal{V}$-fractional derivative $_{i}^{\rho }V_{\gamma ,\beta ,\alpha }^{\delta ,p,q}\left( \cdot \right) $ on both sides of {\rm{Eq}.(\ref{E})} and the relation $_{i}^{\rho }V_{\gamma ,\beta ,\alpha }^{\delta ,p,q}\left( \frac{t^{\alpha }}{\alpha }\right) =\displaystyle\frac{{\Gamma \left( \beta \right) }\left( \rho \right)_{q}}{{\Gamma \left( \gamma +\beta \right) }\left( \delta \right) _{p}}$ and $%
_{i}^{\rho }V_{\gamma ,\beta ,\alpha }^{\delta ,p,q}\left( c\right) =0$, with
$c$ a constant, we  conclude that
\begin{equation*}
_{i}^{\rho }V_{\gamma ,\beta ,\alpha }^{\delta ,p,q}f\left( c\right) =\frac{%
f\left( b\right) -f\left( a\right) }{\displaystyle\frac{b^{\alpha }}{\alpha }%
-\frac{a^{\alpha }}{\alpha }}.
\end{equation*}
\end{proof}

The proof of the extension of the mean value theorem below will be omitted, since it develops is analogous as the $M$-fractional derivative \cite{JEC1,JEC}.

\begin{theorem}\label{teo7}{\rm\text{(Extension mean value theorem for fractional $\alpha$-differentiable functions)}} Let $a>0$ and $f,g:\left[ a,b\right] \rightarrow \mathbb{R}$ functions that satisfy:
\begin{enumerate}
\item $f, g$ are continuous in $[a,b]$.
\item $f, g$ are $\alpha$-differentiable for some $\alpha\in(0,1)$.
\end{enumerate}
Then, $\exists c\in(a,b)$, such that
\begin{equation*}
\frac{_{i}^{\rho }\mathcal{V}_{\gamma ,\beta ,\alpha }^{\delta ,p,q}f\left( c\right) }{_{i}^{\rho }\mathcal{V}_{\gamma ,\beta ,\alpha }^{\delta ,p,q}g\left( c\right) }=\frac{f\left( b\right) -f\left( a\right) }{g\left(b\right) -g\left( a\right) },
\end{equation*}
with $\gamma ,\beta ,\rho ,\delta \in \mathbb{C}$ and $p,q>0$ such that ${Re}\left( \gamma \right) >0$, ${Re}\left( \beta \right) >0$, ${Re}\left( \rho \right) >0$, ${Re}\left( \delta \right) >0$ and ${Re}\left( \gamma \right) +p\geq q$.
\end{theorem}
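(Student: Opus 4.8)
The plan is to imitate the classical proof of Cauchy's mean value theorem, with the ordinary derivative replaced by the truncated $\mathcal{V}$-fractional derivative and with Rolle's theorem for $\alpha$-differentiable functions playing the role of the classical Rolle theorem. The linearity and chain rule from Theorem \ref{teo2} (items 1 and 5) are the only structural facts needed.

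First I would introduce the auxiliary function
\[
h(x) = \bigl( f(b)-f(a) \bigr)\, g(x) - \bigl( g(b)-g(a) \bigr)\, f(x), \qquad x\in[a,b].
\]
Since $f$ and $g$ are continuous on $[a,b]$ and $\alpha$-differentiable on $(a,b)$, linearity (item 1 of Theorem \ref{teo2}) shows that $h$ is continuous on $[a,b]$ and $\alpha$-differentiable on $(a,b)$. A one-line computation gives $h(a) = f(b)g(a)-g(b)f(a) = h(b)$, so $h$ meets all three hypotheses of Rolle's theorem for fractional $\alpha$-differentiable functions. Applying that theorem produces a point $c\in(a,b)$ with $_{i}^{\rho}\mathcal{V}_{\gamma,\beta,\alpha}^{\delta,p,q} h(c)=0$, and linearity again yields
\[
\bigl( f(b)-f(a) \bigr)\; {}_{i}^{\rho}\mathcal{V}_{\gamma,\beta,\alpha}^{\delta,p,q} g(c) = \bigl( g(b)-g(a) \bigr)\; {}_{i}^{\rho}\mathcal{V}_{\gamma,\beta,\alpha}^{\delta,p,q} f(c).
\]
Rearranging this equality gives the asserted identity.

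The main obstacle is the same nondegeneracy issue that arises in the ordinary Cauchy mean value theorem: one must know that $g(b)\neq g(a)$ and that $_{i}^{\rho}\mathcal{V}_{\gamma,\beta,\alpha}^{\delta,p,q} g(c)\neq 0$ before dividing. I would dispose of both at once using the chain rule (item 5 of Theorem \ref{teo2}): for $t>0$ the factor $\dfrac{t^{1-\alpha}\Gamma(\beta)(\rho)_q}{\Gamma(\gamma+\beta)(\delta)_p}$ is nonzero, so $_{i}^{\rho}\mathcal{V}_{\gamma,\beta,\alpha}^{\delta,p,q} g(c)=0$ is equivalent to $g'(c)=0$. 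Hence, after cancelling this common nonzero factor from the numerator and denominator of the ratio, the statement reduces exactly to the classical Cauchy mean value theorem applied to $f$ and $g$ on $[a,b]$, and the usual argument (if $g(b)=g(a)$ then Rolle forces a zero of $g'$, excluded by the hypothesis that the ratio is well defined) settles the remaining case. I expect this cancellation step to be where care is needed, since it is the only place the specific form of the $\mathcal{V}$-fractional derivative enters; everything else is formally identical to the $M$-fractional derivative proof in \cite{JEC1,JEC}.
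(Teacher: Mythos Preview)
Your proposal is correct and follows the standard Cauchy mean value theorem strategy: build the auxiliary function $h(x)=(f(b)-f(a))g(x)-(g(b)-g(a))f(x)$, check $h(a)=h(b)$, apply the fractional Rolle theorem, and use linearity of $_{i}^{\rho}\mathcal{V}_{\gamma,\beta,\alpha}^{\delta,p,q}$ to conclude. The paper does not actually write out a proof of this theorem; it simply states that the argument is analogous to the $M$-fractional case in \cite{JEC1,JEC}, which is precisely the approach you have reproduced, so there is nothing to compare.
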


\begin{definition}\label{def8} Let $\alpha\in(n,n+1]$, for some $n\in\mathbb{N}$ and $f$ $n$-differentiable for $t>0$. Then the $n$-th derivative $\mathcal{V}$-fractional derivative of $f$ is defined by
\begin{equation}\label{PDS}
_{i}^{\rho }\mathcal{V}_{\gamma ,\beta ,\alpha }^{\delta ,p,q;n}f\left(
t\right) :=\underset{\varepsilon \rightarrow 0}{\lim }\frac{f^{(n)}\left(
t\;\Gamma \left( \beta \right)\; _{i}\mathbb{E}_{\gamma ,\beta ,p}^{\rho
,\delta ,q}\left( \varepsilon t^{n-\alpha }\right) \right) -f^{(n)}\left(
t\right) }{\varepsilon },
\end{equation}
with $\gamma ,\beta ,\rho ,\delta \in \mathbb{C}$ and $p,q>0$ such that ${Re}\left( \gamma \right) >0$, ${Re}\left( \beta \right) >0$, ${Re}\left( \rho \right) >0$, ${Re}\left( \delta \right) >0$ and ${Re}\left( \gamma \right) +p\geq q$, if the limit exists.
\end{definition}

From Definition \ref{def7} and the chain rule, that is, from item 5 of the Theorem \ref{teo2}, by induction on $n$, we can prove that $_{i}^{\rho }\mathcal{V}_{\gamma ,\beta ,\alpha }^{\delta ,p,q;n}f\left(t\right) =\displaystyle\frac{t^{n+1-\alpha }\Gamma \left( \beta \right) \left( \rho\right) _{q}}{\Gamma \left( \gamma +\beta \right) \left( \delta \right) _{p}}f^{\left( n+1\right) }\left( t\right) $, $\alpha\in(n,n+1]$ and $f$ is $(n+1)$-differentiable for $t>0$.

\section{$\mathcal{V}$-fractional integral}

In this section we introduce the $\mathcal{V}$-fractional integral of a function $f$. From the definition, we present a theorem showing that the $\mathcal{V}$-fractional integral is linear, the inverse property, the fundamental theorem of calculus, the part integration theorem, and a theorem that refer to the mean value for integrals. In addition, a theorem is given which returns the sum of the orders of two $\mathcal{V}$-fractional integrals, semi group property. Other results on the $\mathcal{V}$-fractional integral are also presented.

\begin{definition}\label{def9} {\rm($\mathcal{V}$-fractional integral)} Let $a\geq 0$ and $t\geq a$. Also, let $f$ be a function defined on $(a,t]$ and $0<\alpha<1$. Then, the $\mathcal{V}$-fractional integral of $f$ of order $\alpha$ is defined by
\begin{equation}\label{A15}
_{a}^{\rho }\mathcal{I}_{\gamma ,\beta ,\alpha }^{\delta ,p,q}f\left( t\right) :=\frac{\Gamma \left( \gamma +\beta \right) \left( \delta \right) _{p}}{\Gamma\left( \beta \right) \left( \rho \right) _{q}}\int_{a}^{t}\frac{f\left(x\right) }{x^{1-\alpha }}dx,
\end{equation}
with $\gamma ,\beta ,\rho ,\delta \in \mathbb{C}$ and $p,q>0$ such that ${Re}\left( \gamma \right) >0$, ${Re}\left( \beta \right) >0$, ${Re}\left( \rho \right) >0$, ${Re}\left( \delta \right) >0$ and ${Re}\left( \gamma \right) +p\geq q$.
\end{definition}

\begin{theorem}\label{st} Let $a\geq 0$ and $t\geq a$. Also, let $f,g:[a,t]\rightarrow\mathbb{R}$ continuous functions, such that exist $_{a}^{\rho }\mathcal{I}_{\gamma ,\beta ,\alpha }^{\delta ,p,q} f\left( t\right)$, $_{a}^{\rho }\mathcal{I}_{\gamma ,\beta ,\alpha }^{\delta ,p,q} g\left( t\right)$ with $0<\alpha<1$. Then, we have
\begin{enumerate}

\item $_{a}^{\rho }\mathcal{I}_{\gamma ,\beta ,\alpha }^{\delta ,p,q}\left( f\pm
g\right) \left( t\right) =\;_{a}^{\rho }\mathcal{I}_{\gamma ,\beta
,\alpha }^{\delta ,p,q}f\left( t\right) \pm \left( _{a}^{\rho }%
\mathcal{I}_{\gamma ,\beta ,\alpha }^{\delta ,p,q}g\left( t\right) \right) $.

\item $_{a}^{\rho }\mathcal{I}_{\gamma ,\beta ,\alpha }^{\delta ,p,q}\lambda
f\left( t\right) =\lambda \;_{a}^{\rho }\mathcal{I}_{\gamma ,\beta
,\alpha }^{\delta ,p,q}f\left( t\right) $.

\item If $t=a$, then $_{a}^{\rho }\mathcal{I}_{\gamma ,\beta ,\alpha }^{\delta ,p,q}f\left(
a\right) =0$.

\item If $f\left( x\right) \geq 0$, then $_{a}^{\rho }\mathcal{I}_{\gamma ,\beta ,\alpha }^{\delta ,p,q}f\left(t\right) \geq 0$.

\end{enumerate}
\end{theorem}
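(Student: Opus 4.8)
The plan is to prove each of the four items of Theorem \ref{st} directly from the definition of the $\mathcal{V}$-fractional integral in Eq.~(\ref{A15}), since all four are immediate consequences of the corresponding elementary properties of the Riemann integral, the constant factor $K := \frac{\Gamma(\gamma+\beta)(\delta)_p}{\Gamma(\beta)(\rho)_q}$ being fixed and independent of $x$.

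For item (1), I would write out $_{a}^{\rho }\mathcal{I}_{\gamma ,\beta ,\alpha }^{\delta ,p,q}(f\pm g)(t) = K\int_a^t \frac{f(x)\pm g(x)}{x^{1-\alpha}}\,dx$ and split the integral using linearity of the Riemann integral (valid because $f,g$ are continuous on $[a,t]$, hence $f/x^{1-\alpha}$ and $g/x^{1-\alpha}$ are integrable on $(a,t]$), obtaining $K\int_a^t \frac{f(x)}{x^{1-\alpha}}\,dx \pm K\int_a^t \frac{g(x)}{x^{1-\alpha}}\,dx$, which is exactly $_{a}^{\rho }\mathcal{I}_{\gamma ,\beta ,\alpha }^{\delta ,p,q}f(t) \pm \,_{a}^{\rho }\mathcal{I}_{\gamma ,\beta ,\alpha }^{\delta ,p,q}g(t)$. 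For item (2), pulling the scalar $\lambda$ out of the integral gives the claim in one line. For item (3), when $t=a$ the integral $\int_a^a \frac{f(x)}{x^{1-\alpha}}\,dx$ vanishes, so the whole expression is $0$. For item (4), if $f(x)\ge 0$ on $[a,t]$ then $\frac{f(x)}{x^{1-\alpha}}\ge 0$ for $x>0$, so the integral is nonnegative; since $K>0$ under the stated hypotheses on $\gamma,\beta,\rho,\delta,p,q$ (all the relevant gamma-function and Pochhammer values being positive reals), the product $K\int_a^t \frac{f(x)}{x^{1-\alpha}}\,dx$ is nonnegative as well.

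There is essentially no obstacle here; the only mild subtlety is the integrability of $f(x)/x^{1-\alpha}$ near $x=0$ when $a=0$, but since $0<\alpha<1$ the weight $x^{\alpha-1}$ is integrable at the origin and $f$ is continuous (hence bounded near $0$), so $_{a}^{\rho }\mathcal{I}_{\gamma ,\beta ,\alpha }^{\delta ,p,q}f(t)$ is well-defined; in any case the statement already assumes these integrals exist. The other point worth a sentence is verifying $K>0$: one notes that for real parameters the gamma values $\Gamma(\gamma+\beta)$, $\Gamma(\beta)$ are positive, and $(\rho)_q = \Gamma(\rho+q)/\Gamma(\rho)$, $(\delta)_p = \Gamma(\delta+p)/\Gamma(\delta)$ are ratios of positive numbers when $\mathrm{Re}(\rho),\mathrm{Re}(\delta)>0$ and $p,q>0$, so $K$ is a positive real constant. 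With these remarks in place the four items follow by inspection, and I would present them as a short enumerated proof mirroring the statement.
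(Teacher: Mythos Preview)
Your proposal is correct and is exactly in line with the paper's treatment: the paper omits the proof entirely, remarking only that it follows from Definition~\ref{def9}, and your enumerated verification of items (1)--(4) via linearity, homogeneity, the vanishing of $\int_a^a$, and monotonicity of the Riemann integral is precisely the direct argument the authors have in mind. Your extra remarks on integrability near $x=0$ and on the positivity of the constant $K$ go slightly beyond what the paper states but are harmless clarifications.
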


We will omit the proof of Theorem \ref{st}, since it follows from the Definition \ref{def9}.

\begin{theorem}\label{KJ} Let $a\geq 0$, $t\geq a$ and $f:[a,t]\rightarrow\mathbb{R}$ a continuous function, such that exist $_{a}^{\rho }\mathcal{I}_{\gamma ,\beta ,\alpha }^{\delta ,p,q} f\left( t\right)$, $_{a}^{\rho }\mathcal{I}_{\gamma ,\beta ,\mu }^{\delta ,p,q} f\left( t\right)$ with $0<\alpha<1$ and $0<\mu<1$. Then, we have
\begin{equation*}
_{a}^{\rho }\mathcal{I}_{\gamma ,\beta ,\alpha }^{\delta ,p,q}\left(
_{a}^{\rho }\mathcal{I}_{\gamma ,\beta ,\mu }^{\delta ,p,q}f\left( t\right)
\right) =\frac{\Gamma \left( \gamma +\beta \right) \left( \delta \right) _{p}
}{\Gamma \left( \beta \right) \left( \rho \right) _{q}}\left( \frac{
t^{\alpha }}{\alpha }\; _{a}^{\rho }\mathcal{I}_{\gamma ,\beta ,\alpha
}^{\delta ,p,q}f\left( t\right) -\frac{1}{\alpha }\;_{a}^{\rho }
\mathcal{I}_{\gamma ,\beta ,\alpha +\mu }^{\delta ,p,q}f\left( t\right) \right) .
\end{equation*}	
\end{theorem}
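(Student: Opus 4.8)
The plan is to reduce everything to an elementary computation with the ordinary Riemann integral, using only Definition \ref{def9}. First I would write out the inner integral explicitly: by Definition \ref{def9},
\begin{equation*}
_{a}^{\rho }\mathcal{I}_{\gamma ,\beta ,\mu }^{\delta ,p,q}f\left( t\right) =\frac{\Gamma \left( \gamma +\beta \right) \left( \delta \right) _{p}}{\Gamma \left( \beta \right) \left( \rho \right) _{q}}\int_{a}^{t}\frac{f\left( x\right) }{x^{1-\mu }}\,dx ,
\end{equation*}
which is a function of $t$; call it $F(t)$. Then I would apply the operator $_{a}^{\rho }\mathcal{I}_{\gamma ,\beta ,\alpha }^{\delta ,p,q}$ to $F$, again via Definition \ref{def9}, obtaining a double integral
\begin{equation*}
_{a}^{\rho }\mathcal{I}_{\gamma ,\beta ,\alpha }^{\delta ,p,q}\left( _{a}^{\rho }\mathcal{I}_{\gamma ,\beta ,\mu }^{\delta ,p,q}f\left( t\right) \right) =\left( \frac{\Gamma \left( \gamma +\beta \right) \left( \delta \right) _{p}}{\Gamma \left( \beta \right) \left( \rho \right) _{q}}\right) ^{2}\int_{a}^{t}\frac{1}{y^{1-\alpha }}\left( \int_{a}^{y}\frac{f\left( x\right) }{x^{1-\mu }}\,dx\right) dy .
\end{equation*}

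The key step is then to integrate by parts in the outer $y$-integral, taking $u=\int_{a}^{y}x^{\mu -1}f(x)\,dx$ and $dv=y^{\alpha -1}\,dy$, so that $du=y^{\mu -1}f(y)\,dy$ and $v=y^{\alpha }/\alpha$. Since $u$ vanishes at $y=a$ (part 3 of Theorem \ref{st}), the boundary term at the lower endpoint drops out, and the boundary term at $y=t$ gives
\begin{equation*}
\frac{t^{\alpha }}{\alpha }\int_{a}^{t}\frac{f\left( x\right) }{x^{1-\mu }}\,dx ,
\end{equation*}
while the remaining integral is $-\frac{1}{\alpha }\int_{a}^{t}y^{\alpha +\mu -1}f(y)\,dy$. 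Recognizing $\int_{a}^{t}x^{\mu -1}f(x)\,dx$ and $\int_{a}^{t}y^{\alpha +\mu -1}f(y)\,dy$ as the defining integrals (up to the appropriate prefactors) of $_{a}^{\rho }\mathcal{I}_{\gamma ,\beta ,\mu }^{\delta ,p,q}f(t)$ and $_{a}^{\rho }\mathcal{I}_{\gamma ,\beta ,\alpha +\mu }^{\delta ,p,q}f(t)$ respectively, and carefully tracking the factor $\left( \Gamma(\gamma+\beta)(\delta)_p/(\Gamma(\beta)(\rho)_q)\right)^2$ so that exactly one copy is absorbed into each of the two inner integrals and one copy remains outside, yields the claimed identity.

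The only real obstacle is bookkeeping: making sure the constant $\Gamma(\gamma+\beta)(\delta)_p/(\Gamma(\beta)(\rho)_q)$ is distributed correctly — one factor stays as the overall prefactor on the right-hand side, and the second factor is what converts the bare integrals $\int_a^t x^{\mu-1}f\,dx$ and $\int_a^t y^{\alpha+\mu-1}f\,dy$ back into $_{a}^{\rho }\mathcal{I}_{\gamma ,\beta ,\mu }^{\delta ,p,q}f(t)$ and $_{a}^{\rho }\mathcal{I}_{\gamma ,\beta ,\alpha +\mu }^{\delta ,p,q}f(t)$. Continuity of $f$ on $[a,t]$ guarantees that all the integrals converge and that Fubini/integration by parts is legitimate, so no further analytic care is needed.
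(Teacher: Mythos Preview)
Your argument is correct. Both your proof and the paper's start from the same double integral
\[
\left(\frac{\Gamma(\gamma+\beta)(\delta)_p}{\Gamma(\beta)(\rho)_q}\right)^{2}\int_{a}^{t}y^{\alpha-1}\left(\int_{a}^{y}x^{\mu-1}f(x)\,dx\right)dy,
\]
but the paper evaluates it by interchanging the order of integration (Fubini), computing $\int_{s}^{t}y^{\alpha-1}\,dy=(t^{\alpha}-s^{\alpha})/\alpha$, and then splitting the resulting single integral in~$s$, whereas you evaluate it by integration by parts in~$y$. The two manipulations are equivalent here and equally short; Fubini is marginally more direct since no boundary term needs to be checked, while your version has the advantage of not invoking an order-swap at all. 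Note also that your computation, like the paper's own proof, produces $_{a}^{\rho}\mathcal{I}_{\gamma,\beta,\mu}^{\delta,p,q}f(t)$ in the first term rather than the $_{a}^{\rho}\mathcal{I}_{\gamma,\beta,\alpha}^{\delta,p,q}f(t)$ printed in the theorem statement; the subscript $\alpha$ there is a typo in the paper, and your result is the correct one.
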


\begin{proof}In fact, using {\rm Definition \ref{def9}}, we have
\begin{eqnarray*}
_{a}^{\rho }\mathcal{I}_{\gamma ,\beta ,\alpha }^{\delta ,p,q}\left(
_{a}^{\rho }\mathcal{I}_{\gamma ,\beta ,\mu }^{\delta ,p,q}f\left( t\right)
\right)  &=&\frac{\Gamma \left( \gamma +\beta \right) \left( \delta \right)
_{p}}{\Gamma \left( \beta \right) \left( \rho \right) _{q}}%
\int_{a}^{t}\left( _{a}^{\rho }\mathcal{I}_{\gamma ,\beta ,\mu }^{\delta
,p,q}f\left( t\right) \right) x^{1-\alpha }dx  \notag \\
&=&\left( \frac{\Gamma \left( \gamma +\beta \right) \left( \delta \right)
_{p}}{\Gamma \left( \beta \right) \left( \rho \right) _{q}}\right)
^{2}\int_{a}^{t}\left( \int_{a}^{x}\frac{f\left( s\right) }{s^{1-\mu }}%
ds\right) x^{1-\alpha }dx  \notag \\
&=&\left( \frac{\Gamma \left( \gamma +\beta \right) \left( \delta \right)
_{p}}{\Gamma \left( \beta \right) \left( \rho \right) _{q}}\right)
^{2}\int_{a}^{t}\frac{f\left( s\right) }{s^{1-\mu }}\left( \frac{t^{\alpha }%
}{\alpha }-\frac{s^{\alpha }}{\alpha }\right) ds  \notag \\
&=&\frac{\Gamma \left( \gamma +\beta \right) \left( \delta \right) _{p}}{%
\Gamma \left( \beta \right) \left( \rho \right) _{q}}\left( \frac{t^{\alpha }%
}{\alpha }\; _{a}^{\rho }\mathcal{I}_{\gamma ,\beta ,\mu }^{\delta
,p,q}f\left( t\right)  -\frac{1}{\alpha }\ _{a}^{\rho }\mathcal{I%
}_{\gamma ,\beta ,\alpha +\mu }^{\delta ,p,q}f\left( t\right)\right). \notag \\
\end{eqnarray*}
\end{proof}

From  Theorem \ref{KJ}, we conclude that $_{a}^{\rho }\mathcal{I}_{\gamma ,\beta ,\alpha }^{\delta ,p,q}\left(_{a}^{\rho }\mathcal{I}_{\gamma ,\beta ,\mu }^{\delta ,p,q}f\left( t\right)
\right) \neq \left( _{a}^{\rho }\mathcal{I}_{\gamma ,\beta ,\alpha +\mu}^{\delta ,p,q}f\left( t\right) \right)$.

\begin{theorem}\label{teo8} {\rm(Reverse)} Let $a\geq 0$, $t\geq a$ and $0<\alpha<1$. Also, let $f$ be a continuous function such that $_{a}^{\rho }\mathcal{I}_{\gamma ,\beta ,\alpha }^{\delta ,p,q}f\left( t\right)$ exist. Then
\begin{equation*}
_{i}^{\rho }\mathcal{V}_{\gamma ,\beta ,\alpha }^{\delta ,p,q}\left(
_{a}^{\rho }\mathcal{I}_{\gamma ,\beta ,\alpha }^{\delta ,p,q}f\left(
t\right) \right) =f\left( t\right) ,
\end{equation*}
with $\gamma ,\beta ,\rho ,\delta \in \mathbb{C}$ and $p,q>0$ such that ${Re}\left( \gamma \right) >0$, ${Re}\left( \beta \right) >0$, ${Re}\left( \rho \right) >0$, ${Re}\left( \delta \right) >0$ and ${Re}\left( \gamma \right) +p\geq q$.
\end{theorem}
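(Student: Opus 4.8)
The plan is to combine the chain rule for the truncated $\mathcal{V}$-fractional derivative (item 5 of Theorem~\ref{teo2}) with the first fundamental theorem of calculus. First I would invoke Definition~\ref{def9} to write
\begin{equation*}
F(t):=\;_{a}^{\rho }\mathcal{I}_{\gamma ,\beta ,\alpha }^{\delta ,p,q}f\left( t\right) =\frac{\Gamma \left( \gamma +\beta \right) \left( \delta \right) _{p}}{\Gamma\left( \beta \right) \left( \rho \right) _{q}}\int_{a}^{t}\frac{f\left(x\right) }{x^{1-\alpha }}\,dx,
\end{equation*}
and observe that since $f$ is continuous on $[a,t]$, the integrand $x\mapsto f(x)x^{\alpha-1}$ is continuous on $(a,t]$, so $F$ is differentiable there with
\begin{equation*}
F'(t)=\frac{\Gamma \left( \gamma +\beta \right) \left( \delta \right) _{p}}{\Gamma\left( \beta \right) \left( \rho \right) _{q}}\cdot\frac{f(t)}{t^{1-\alpha }}
\end{equation*}
by the classical fundamental theorem of calculus.

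Next, because $F$ is differentiable, I would apply the chain rule formula from item~5 of Theorem~\ref{teo2}, namely $_{i}^{\rho }\mathcal{V}_{\gamma ,\beta ,\alpha }^{\delta ,p,q}F(t)=\dfrac{t^{1-\alpha }\Gamma \left( \beta \right) \left( \rho \right) _{q}}{\Gamma\left( \gamma +\beta \right) \left( \delta \right) _{p}}F'(t)$. Substituting the expression for $F'(t)$ obtained above, the factor $t^{1-\alpha}$ cancels against $t^{\alpha-1}$, and the constant $\dfrac{\Gamma \left( \beta \right) \left( \rho \right) _{q}}{\Gamma\left( \gamma +\beta \right) \left( \delta \right) _{p}}$ cancels against its reciprocal coming from Definition~\ref{def9}, leaving exactly $f(t)$. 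This is a short computation once the two ingredients are in place.

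The only genuine subtlety — and hence the step I would treat most carefully — is the appeal to the fundamental theorem of calculus at the left endpoint and the hypothesis that $_{a}^{\rho }\mathcal{I}_{\gamma ,\beta ,\alpha }^{\delta ,p,q}f(t)$ exists: for $0<\alpha<1$ the weight $x^{\alpha-1}$ is unbounded near $x=0$ when $a=0$, so one needs continuity of $f$ on $[a,t]$ together with the existence of the (improper, if $a=0$) integral to guarantee that $F$ is well defined and that its derivative is $f(t)t^{\alpha-1}$ (up to the constant) at interior points $t>a$. Since the statement only asserts the identity at points $t$ for which $\alpha$-differentiability and the integral make sense, I would simply note that at such $t$ the integrand is continuous on a neighborhood of $t$ inside $(a,t]$, which is all the fundamental theorem of calculus requires, and then conclude as above.
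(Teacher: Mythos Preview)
Your proof is correct and follows essentially the same approach as the paper: apply the chain rule (item~5 of Theorem~\ref{teo2}) to the integral $F(t)$ and then use the classical fundamental theorem of calculus to compute $F'(t)$, after which the constants and the factor $t^{1-\alpha}$ cancel. The paper's proof is in fact more terse than yours, omitting the explicit computation of $F'(t)$ and the discussion of the endpoint subtlety.
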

\begin{proof} In fact, using the chain rule and {\rm Eq.(\ref{def9})}, we have
\begin{eqnarray*}
_{i}^{\rho }\mathcal{V}_{\gamma ,\beta ,\alpha }^{\delta ,p,q}\left(
_{a}^{\rho }\mathcal{I}_{\gamma ,\beta ,\alpha }^{\delta ,p,q}f\left(
t\right) \right)  &=&\frac{t^{1-\alpha }\Gamma \left( \beta \right) \left( \rho \right) _{q}}{\Gamma \left( \gamma +\beta \right) \left( \delta \right) _{p}}\frac{d}{dt}\left( _{a}^{\rho }\mathcal{I}_{\gamma ,\beta ,\alpha }^{\delta ,p,q}f\left( t\right) \right)   \notag \\
&=&f\left( t\right).
\end{eqnarray*}
\end{proof}

\begin{theorem}\label{teo9} {\rm(Fundamental Theorem of Calculus)} Let $f:(a,b)\rightarrow\mathbb{R}$ be a differentiable function and $0<\alpha\leq 1$. Then, $\forall t>a$, we have
\begin{equation}\label{A16}
_{a}^{\rho }\mathcal{I}_{\gamma ,\beta ,\alpha }^{\delta ,p,q}\left(
_{i}^{\rho }\mathcal{V}_{\gamma ,\beta ,\alpha }^{\delta ,p,q}f\left(
t\right) \right) =f\left( t\right) -f\left( a\right),
\end{equation}
with $\gamma ,\beta ,\rho ,\delta \in \mathbb{C}$ and $p,q>0$ such that ${Re}\left( \gamma \right) >0$, ${Re}\left( \beta \right) >0$, ${Re}\left( \rho \right) >0$, ${Re}\left( \delta \right) >0$ and ${Re}\left( \gamma \right) +p\geq q$.
\end{theorem}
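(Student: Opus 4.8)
The plan is to reduce everything to the classical fundamental theorem of calculus by using the chain rule from item 5 of Theorem \ref{teo2} together with the definition of the $\mathcal{V}$-fractional integral in Definition \ref{def9}. First I would compute the inner expression $_{i}^{\rho }\mathcal{V}_{\gamma ,\beta ,\alpha }^{\delta ,p,q}f(t)$ using the chain rule, obtaining
\begin{equation*}
_{i}^{\rho }\mathcal{V}_{\gamma ,\beta ,\alpha }^{\delta ,p,q}f\left( t\right) =\frac{t^{1-\alpha }\Gamma \left( \beta \right) \left( \rho \right) _{q}}{\Gamma \left( \gamma +\beta \right) \left( \delta \right) _{p}}\frac{df\left( t\right) }{dt},
\end{equation*}
which is legitimate since $f$ is assumed differentiable on $(a,b)$. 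Then I would substitute this into the $\mathcal{V}$-fractional integral and watch the two reciprocal constant factors cancel: the prefactor $\Gamma(\gamma+\beta)(\delta)_p / (\Gamma(\beta)(\rho)_q)$ coming from Definition \ref{def9} multiplies the factor $\Gamma(\beta)(\rho)_q / (\Gamma(\gamma+\beta)(\delta)_p)$ coming from the chain-rule expression, giving $1$.

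Concretely, the computation runs
\begin{eqnarray*}
_{a}^{\rho }\mathcal{I}_{\gamma ,\beta ,\alpha }^{\delta ,p,q}\left( _{i}^{\rho }\mathcal{V}_{\gamma ,\beta ,\alpha }^{\delta ,p,q}f\left( t\right) \right) &=&\frac{\Gamma \left( \gamma +\beta \right) \left( \delta \right) _{p}}{\Gamma \left( \beta \right) \left( \rho \right) _{q}}\int_{a}^{t}\frac{1}{x^{1-\alpha }}\left( \frac{x^{1-\alpha }\Gamma \left( \beta \right) \left( \rho \right) _{q}}{\Gamma \left( \gamma +\beta \right) \left( \delta \right) _{p}}f^{\prime }\left( x\right) \right) dx \\
&=&\int_{a}^{t}f^{\prime }\left( x\right) dx \\
&=&f\left( t\right) -f\left( a\right),
\end{eqnarray*}
where the last equality is the ordinary fundamental theorem of calculus, applicable because $f$ is differentiable (hence $f'$ integrable on the relevant interval). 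The factor $x^{1-\alpha}$ in the numerator of the integrand exactly cancels the $x^{1-\alpha}$ in the denominator coming from $\mathcal{I}$, so no improper-integral subtleties survive even when $0<\alpha<1$ and $a=0$.

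There is essentially no hard part here: the statement is a formal consequence of the chain rule and Definition \ref{def9}. The only point requiring a word of care is the hypothesis on $\alpha$ — the chain rule in Theorem \ref{teo2} is stated for $0<\alpha\le 1$, and when $\alpha=1$ the integrand weight $x^{1-\alpha}$ is identically $1$, so the identity degenerates to the classical fundamental theorem of calculus, which is consistent. I would also note in passing that differentiability of $f$ is exactly the hypothesis needed to invoke item 5 of Theorem \ref{teo2}, so the assumptions of the theorem are not superfluous. Aside from that observation, I expect the proof to be a two-line display as above.
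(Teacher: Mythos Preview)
Your proof is correct and follows exactly the same route as the paper: apply the chain rule from item 5 of Theorem \ref{teo2} to express the inner $\mathcal{V}$-derivative as $\frac{t^{1-\alpha}\Gamma(\beta)(\rho)_q}{\Gamma(\gamma+\beta)(\delta)_p}f'(t)$, substitute into the $\mathcal{V}$-fractional integral of Definition \ref{def9}, cancel the reciprocal constants and the factor $x^{1-\alpha}$, and finish with the classical fundamental theorem of calculus. Your write-up is in fact more explicit about the cancellation than the paper's own two-line proof.
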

\begin{proof} In fact, since $f$ is a differentiable function, using the chain rule and the fundamental theorem of calculus for integer order derivatives, we have
\begin{eqnarray*}
_{a}^{\rho }\mathcal{I}_{\gamma ,\beta ,\alpha }^{\delta ,p,q}\left(
_{i}^{\rho }\mathcal{V}_{\gamma ,\beta ,\alpha }^{\delta ,p,q}f\left(
t\right) \right)  &=&\frac{\Gamma \left( \gamma +\beta \right) \left( \delta
\right) _{p}}{\Gamma \left( \beta \right) \left( \rho \right) _{q}}%
\int_{a}^{t}\frac{\left( _{i}^{\rho }\mathcal{V}_{\gamma ,\beta ,\alpha
}^{\delta ,p,q}\right) }{x^{1-\alpha }}f\left( x\right) dx  \notag \\
&=&f\left( t\right) -f\left( a\right).
\end{eqnarray*}
\end{proof}

If $f(a)=0$, then by Eq.(\ref{A16}), we have $_{a}^{\rho }\mathcal{I}_{\gamma ,\beta ,\alpha }^{\delta ,p,q}\left(_{i}^{\rho }\mathcal{V}_{\gamma ,\beta ,\alpha }^{\delta ,p,q}f\left(
t\right) \right) =f\left( t\right)$.

\begin{theorem}\label{teo10} Let $\gamma ,\beta ,\rho ,\delta \in \mathbb{C}$ and $p,q>0$ such that ${Re}\left( \gamma \right) >0$, ${Re}\left( \beta \right) >0$, ${Re}\left( \rho \right) >0$, ${Re}\left( \delta \right) >0$ and ${Re}\left( \gamma \right) +p\geq q$ and $f, g:[a,b]\rightarrow\mathbb{R}$ differentiable functions and $0<\alpha<1$. Then, we have
\begin{equation*}
\int_{a}^{b}f\left( x\right) \left( _{i}^{\rho }\mathcal{V}_{\gamma ,\beta
,\alpha }^{\delta ,p,q}g\left( x\right) \right) d_{\omega }x=f\left(
x\right) g\left( x\right) \mid _{a}^{b}-\int_{a}^{b}g\left( x\right) \left(
_{i}^{\rho }\mathcal{V}_{\gamma ,\beta ,\alpha }^{\delta ,p,q}f\left(
x\right) \right) d_{\omega }x,
\end{equation*}
with $d_{\omega }x=\displaystyle\frac{\Gamma \left( \gamma +\beta \right) \left( \delta
\right) _{p}}{\Gamma \left( \beta \right) \left( \rho \right) _{q}}\displaystyle\frac{dx}{
x^{1-\alpha }}$.
\end{theorem}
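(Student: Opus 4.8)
The plan is to reduce the asserted identity to the classical integration-by-parts formula for Riemann integrals, using the chain rule of item 5 of Theorem \ref{teo2} to pass back and forth between the truncated $\mathcal{V}$-fractional derivative and the ordinary derivative. Since $g$ is differentiable on $[a,b]$, that chain rule gives
\[
_{i}^{\rho }\mathcal{V}_{\gamma ,\beta ,\alpha }^{\delta ,p,q}g(x)=\frac{x^{1-\alpha }\,\Gamma (\beta )\,(\rho )_{q}}{\Gamma (\gamma +\beta )\,(\delta )_{p}}\,g^{\prime }(x).
\]
I would insert this into the left-hand integrand; since the weight $d_{\omega }x$ carries the reciprocal constant $\Gamma (\gamma +\beta )(\delta )_{p}/\bigl(\Gamma (\beta )(\rho )_{q}\bigr)$ together with the factor $x^{\alpha -1}$, the two normalizing constants cancel and $x^{1-\alpha }\cdot x^{\alpha -1}=1$, so that
\[
\int_{a}^{b}f(x)\left( _{i}^{\rho }\mathcal{V}_{\gamma ,\beta ,\alpha }^{\delta ,p,q}g(x)\right) d_{\omega }x=\int_{a}^{b}f(x)\,g^{\prime }(x)\,dx .
\]

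Next I would apply ordinary integration by parts to the last integral, which is legitimate because $f$ and $g$ are differentiable on $[a,b]$, obtaining
\[
\int_{a}^{b}f(x)\,g^{\prime }(x)\,dx=f(x)g(x)\mid _{a}^{b}-\int_{a}^{b}g(x)\,f^{\prime }(x)\,dx .
\]
Finally, reading the first step in reverse, the same chain rule (item 5 of Theorem \ref{teo2}, now applied to $f$) shows that $g(x)\,f^{\prime }(x)\,dx=g(x)\left( _{i}^{\rho }\mathcal{V}_{\gamma ,\beta ,\alpha }^{\delta ,p,q}f(x)\right) d_{\omega }x$, so substituting this into the previous display yields precisely the claimed formula.

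I do not expect a genuine obstacle here: the whole argument is bookkeeping of the constant $\Gamma (\beta )(\rho )_{q}/\bigl(\Gamma (\gamma +\beta )(\delta )_{p}\bigr)$ and of the power $x^{1-\alpha }$. The only points requiring care are that the differentiability hypotheses on $f$ and $g$ are exactly those needed both for item 5 of Theorem \ref{teo2} and for classical integration by parts to hold on all of $[a,b]$, and that the boundary term $f(x)g(x)\mid _{a}^{b}$ must be written with no normalizing factor, since the weight $d_{\omega }x$ affects only the integral terms.
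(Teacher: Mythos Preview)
Your proposal is correct and follows essentially the same route as the paper: unwind $d_{\omega}x$, apply the chain rule (item 5 of Theorem \ref{teo2}) to replace $_{i}^{\rho }\mathcal{V}_{\gamma ,\beta ,\alpha }^{\delta ,p,q}g(x)$ by $\frac{x^{1-\alpha}\Gamma(\beta)(\rho)_q}{\Gamma(\gamma+\beta)(\delta)_p}g'(x)$ so that the constants and the power of $x$ cancel, then invoke ordinary integration by parts and reverse the substitution for the $f'$ term. The paper's proof is the same three-line computation, just slightly more compressed.
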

\begin{proof} 
In fact, by the definition of $\mathcal{V}$-fractional integral {\rm Eq.(\ref{A15})}, using the chain rule and the fundamental theorem of calculus for entire order derivatives, we have
\begin{eqnarray*}
\int_{a}^{b}f\left( x\right) \left( _{i}^{\rho }\mathcal{V}_{\gamma ,\beta
,\alpha }^{\delta ,p,q}g\left( x\right) \right) d_{\omega }x &=&\frac{\Gamma
\left( \gamma +\beta \right) \left( \delta \right) _{p}}{\Gamma \left( \beta
\right) \left( \rho \right) _{q}}\int_{a}^{b}f\left( x\right) \left(
_{i}^{\rho }\mathcal{V}_{\gamma ,\beta ,\alpha }^{\delta ,p,q}g\left(
x\right) \right) \frac{dx}{x^{1-\alpha }}  \notag \\
&=&\int_{a}^{b}f\left( x\right) g^{\prime }\left( x\right) dx  \notag \\
&=&f\left( x\right) g\left( x\right) \mid _{a}^{b}-\int_{a}^{b}g\left(
x\right) \left( _{i}^{\rho }\mathcal{V}_{\gamma ,\beta ,\alpha }^{\delta
,p,q}f\left( x\right) \right) d_{\omega }x.\notag \\
\end{eqnarray*}
\end{proof}

For the proof of {\rm Theorem \ref{teo11}} and {\rm Theorem \ref{teo12}}, simply use the definition of the integral $\mathcal{V}$-fractional {\rm Eq.(\ref{A15})} and follow the same as the respective theorems for the $M$-fractional integral, recently proposed by Sousa and Oliveira, \cite{JEC1,JEC}.

\begin{theorem}\label{teo11} Let $\gamma ,\beta ,\rho ,\delta \in \mathbb{C}$ and $p,q>0$ such that ${Re}\left( \gamma \right) >0$, ${Re}\left( \beta \right) >0$, ${Re}\left( \rho \right) >0$, ${Re}\left( \delta \right) >0$ and ${Re}\left( \gamma \right) +p\geq q$ and $f:[a,b]\rightarrow\mathbb{R}$ a continuous  function. Then, for $0<\alpha<1$, we have
\begin{equation}
\left\vert _{a}^{\rho }\mathcal{I}_{\gamma ,\beta ,\alpha }^{\delta
,p,q}f\left( t\right) \right\vert \leq (_{a}^{\rho }\mathcal{I}_{\gamma,\beta ,\alpha }^{\delta ,p,q}\left\vert f\left( t\right) \right\vert ).
\end{equation}
\end{theorem}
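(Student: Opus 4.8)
The plan is to reduce the asserted bound to the classical triangle inequality for the Riemann integral, exactly as in the corresponding statement for the $M$-fractional integral in \cite{JEC1,JEC}. First I would unfold both sides with Definition~\ref{def9}. Writing $C=\dfrac{\Gamma\left(\gamma+\beta\right)\left(\delta\right)_{p}}{\Gamma\left(\beta\right)\left(\rho\right)_{q}}$ for the normalizing constant, the left-hand side is
\begin{equation*}
\left\vert {}_{a}^{\rho }\mathcal{I}_{\gamma ,\beta ,\alpha }^{\delta ,p,q}f\left( t\right) \right\vert = \left\vert C \right\vert \left\vert \int_{a}^{t}\frac{f\left( x\right) }{x^{1-\alpha }}\,dx \right\vert ,
\end{equation*}
while the right-hand side is $C\displaystyle\int_{a}^{t}\dfrac{\left\vert f\left( x\right) \right\vert }{x^{1-\alpha }}\,dx$.

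Next I would note that since $f$ is continuous on $[a,b]$ the integrand $x\mapsto f\left( x\right) x^{\alpha -1}$ is integrable on $(a,t]$ — the only possible singularity, at $x=0$ when $a=0$, is integrable because $0<\alpha<1$ — so the elementary estimate $\left\vert \int g\right\vert \le \int \left\vert g\right\vert$ is available. Moreover, because $a\ge 0$ and $0<\alpha<1$, the weight $x^{\alpha -1}$ is nonnegative on the interval of integration, hence $\left\vert f\left( x\right) x^{\alpha -1}\right\vert =\left\vert f\left( x\right) \right\vert x^{\alpha -1}$, and therefore
\begin{equation*}
\left\vert \int_{a}^{t}\frac{f\left( x\right) }{x^{1-\alpha }}\,dx \right\vert \le \int_{a}^{t}\frac{\left\vert f\left( x\right) \right\vert }{x^{1-\alpha }}\,dx .
\end{equation*}
Finally I would multiply through by the constant $C$, which under the standing hypotheses on the parameters is taken to be a positive real number — as it is throughout the paper, e.g. in the chain rule of item~5 of Theorem~\ref{teo2} and in Definition~\ref{def9} — and re-identify the right-hand member as ${}_{a}^{\rho }\mathcal{I}_{\gamma ,\beta ,\alpha }^{\delta ,p,q}\left\vert f\left( t\right) \right\vert$ by Definition~\ref{def9}; existence of this last integral is automatic from continuity of $\left\vert f\right\vert$.

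The only delicate point — more a bookkeeping caveat than a genuine obstacle — is the status of $C$: if one allows truly complex parameters then $C$ need not be real, and the inequality as displayed holds only once one passes to $\left\vert C\right\vert$, i.e. only when $\left\vert C\right\vert =C$. In line with the rest of the paper I would simply restrict attention to the case where the normalizing constant is positive, so that no further difficulty arises and the proof is a one-line consequence of Definition~\ref{def9} together with the standard integral triangle inequality.
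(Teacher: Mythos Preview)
Your argument is correct and is exactly the approach the paper indicates: it omits the proof and directs the reader to use Definition~\ref{def9} and proceed as in the corresponding result for the $M$-fractional integral \cite{JEC1,JEC}, which amounts to the classical triangle inequality for the Riemann integral together with the nonnegativity of the weight $x^{\alpha-1}$. Your remark about the positivity of the constant $C$ is a fair caveat that the paper leaves implicit.
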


\begin{theorem}\label{teo12} Let $\gamma ,\beta ,\rho ,\delta \in \mathbb{C}$ and $p,q>0$ such that ${Re}\left( \gamma \right) >0$, ${Re}\left( \beta \right) >0$, ${Re}\left( \rho \right) >0$, ${Re}\left( \delta \right) >0$ and ${Re}\left( \gamma \right) +p\geq q$ and $f:[a,b]\rightarrow\mathbb{R}$ a continuous  function such that
\begin{equation*}
N=\underset{t\in \left[ a,b\right] }{\sup }\left\vert f\left( t\right)
\right\vert .
\end{equation*}

Then, $\forall t\in [a,b]$ and $0<\alpha<1$, we have
\begin{equation*}
\left\vert _{a}^{\rho }\mathcal{I}_{\gamma ,\beta ,\alpha }^{\delta
,p,q}f\left( t\right) \right\vert \leq \frac{\Gamma \left( \gamma +\beta \right) \left( \delta \right) _{p}}{\Gamma \left( \beta \right) \left( \rho \right) _{q}}N\left( \frac{t^{\alpha }}{\alpha }-\frac{a^{\alpha }}{\alpha } \right) .
\end{equation*}
\end{theorem}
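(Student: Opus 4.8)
The plan is to reduce everything to the defining formula (\ref{A15}) together with the elementary monotonicity of the Riemann integral, exactly as in the corresponding statement for the $M$-fractional integral. First I would write out, from Definition \ref{def9},
\[
{}_{a}^{\rho}\mathcal{I}_{\gamma,\beta,\alpha}^{\delta,p,q}f(t)=\frac{\Gamma\left(\gamma+\beta\right)\left(\delta\right)_{p}}{\Gamma\left(\beta\right)\left(\rho\right)_{q}}\int_{a}^{t}\frac{f\left(x\right)}{x^{1-\alpha}}\,dx ,
\]
and then apply Theorem \ref{teo11} to move the modulus inside, obtaining $\bigl|{}_{a}^{\rho}\mathcal{I}_{\gamma,\beta,\alpha}^{\delta,p,q}f(t)\bigr|\leq {}_{a}^{\rho}\mathcal{I}_{\gamma,\beta,\alpha}^{\delta,p,q}\left|f(t)\right|$.

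Next, since $f$ is continuous on the compact interval $[a,b]$, the supremum $N$ is finite and $\left|f(x)\right|\leq N$ for every $x\in[a,t]\subseteq[a,b]$. Using item 4 of Theorem \ref{st} (monotonicity of the $\mathcal{V}$-fractional integral) — or, equivalently, the monotonicity of the ordinary integral together with the positivity of the prefactor $\Gamma\left(\gamma+\beta\right)\left(\delta\right)_{p}/\bigl(\Gamma\left(\beta\right)\left(\rho\right)_{q}\bigr)$ — I would bound
\[
{}_{a}^{\rho}\mathcal{I}_{\gamma,\beta,\alpha}^{\delta,p,q}\left|f(t)\right|\leq \frac{\Gamma\left(\gamma+\beta\right)\left(\delta\right)_{p}}{\Gamma\left(\beta\right)\left(\rho\right)_{q}}\,N\int_{a}^{t}\frac{dx}{x^{1-\alpha}} .
\]

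Finally I would evaluate the remaining integral: since $0<\alpha<1$ we have $\alpha-1>-1$, so $x^{\alpha-1}$ is integrable on $[a,t]$ even when $a=0$, and $\int_{a}^{t}x^{\alpha-1}\,dx=\dfrac{t^{\alpha}}{\alpha}-\dfrac{a^{\alpha}}{\alpha}$. Substituting this into the previous line yields precisely the claimed estimate. There is no genuine obstacle in this argument; the only point deserving a word of care is that the constant $\Gamma\left(\gamma+\beta\right)\left(\delta\right)_{p}/\bigl(\Gamma\left(\beta\right)\left(\rho\right)_{q}\bigr)$ must be handled as a positive real number so that multiplying the inequality through by it preserves its direction — which is consistent with the standing hypotheses on the real parts of $\gamma,\beta,\rho,\delta$ already in force when Theorems \ref{st} and \ref{teo11} were invoked.
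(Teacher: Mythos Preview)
Your argument is correct and is precisely the route the paper indicates: it omits the details and says the proof follows from Definition~\ref{def9} exactly as in the $M$-fractional case of \cite{JEC1,JEC}, which amounts to applying Theorem~\ref{teo11}, bounding $|f|$ by $N$, and integrating $x^{\alpha-1}$ explicitly. Nothing further is needed.
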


\begin{theorem}\label{5a} Let $f$ and $g$ functions that satisfy the following conditions:
\begin{enumerate}
\item continuous in $[a,b]$,
\item limited and integrable in $[a,b]$.
\end{enumerate}
Besides that, let $g(x)$ a no negative (or no positive)  function in $[a,b]$. Let the set $m=\inf \left\{ f\left( x\right) :x\in \left[ a,b\right] \right\} $ and $M=\sup \left\{ f\left( x\right) :x\in \left[ a,b\right] \right\} $. Then, there exist a number $\xi\in(a,b)$ such that
\begin{equation}\label{bcn}
\int_{a}^{b}f\left( x\right) g\left( x\right) d_{\omega }x=\xi
\int_{a}^{b}g\left( x\right) d_{\omega }x,
\end{equation}
with $d_{\omega }x=\displaystyle\frac{\Gamma \left( \gamma +\beta \right) \left( \delta\right) _{p}}{\Gamma \left( \beta \right) \left( \rho \right) _{q}}\frac{dx}{x^{1-\alpha }}$.

If $f$ is continuous in $[a,b]$, then $\exists x_{0}\in [a,b]$, such that
\begin{equation*}
\int_{a}^{b}f\left( x\right) g\left( x\right) d_{\omega }x=f\left(
x_{0}\right) \int_{a}^{b}g\left( x\right) d_{\omega }x.
\end{equation*}
\end{theorem}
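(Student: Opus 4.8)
The plan is to reduce the weighted statement to the classical first mean value theorem for integrals. First I would observe that the weight $d_\omega x = \frac{\Gamma(\gamma+\beta)(\delta)_p}{\Gamma(\beta)(\rho)_q}\frac{dx}{x^{1-\alpha}}$ differs from ordinary Lebesgue measure only by the strictly positive factor $\frac{\Gamma(\gamma+\beta)(\delta)_p}{\Gamma(\beta)(\rho)_q}\, x^{\alpha-1}$ on $[a,b]$ (with $a>0$ so that $x^{\alpha-1}$ is continuous and bounded there). Hence $G(x) := g(x)\,\frac{\Gamma(\gamma+\beta)(\delta)_p}{\Gamma(\beta)(\rho)_q}\,x^{\alpha-1}$ is again integrable on $[a,b]$ and retains the sign of $g$, i.e. it is non-negative (or non-positive) throughout, since multiplying by a positive factor does not change sign.

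Next I would write both integrals in \eqref{bcn} against ordinary $dx$: the left side becomes $\int_a^b f(x) G(x)\,dx$ and the right side becomes $\xi \int_a^b G(x)\,dx$. Now the classical first mean value theorem for integrals applies directly: if $f$ is bounded and integrable and $G$ is integrable with constant sign, then with $m = \inf f$, $M = \sup f$ one has $\int_a^b f G\,dx = \mu \int_a^b G\,dx$ for some $\mu \in [m,M]$. To upgrade $\mu \in [m,M]$ to $\xi \in (a,b)$ one uses continuity of $f$ on $[a,b]$: by the intermediate value theorem $f$ attains every value between its infimum and supremum, so there is $\xi \in (a,b)$ with $f(\xi) = \mu$, which gives exactly \eqref{bcn}. (If $\int_a^b G\,dx = 0$, then since $G$ has constant sign $G \equiv 0$ a.e. and both sides vanish, so any $\xi$ works.) For the final sentence, when $f$ is continuous on the closed interval $[a,b]$, the same argument — now allowing the extreme values to be attained — yields $x_0 \in [a,b]$ with $f(x_0) = \mu$, so $\int_a^b f g\, d_\omega x = f(x_0)\int_a^b g\, d_\omega x$.

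The only genuine subtlety, and the step I would flag as the main obstacle, is bookkeeping about the positivity of the measure: one must be careful that $a > 0$ is in force (otherwise $x^{\alpha-1}$ blows up at $0$ and the weight is not a bounded density), and that the constant $\frac{\Gamma(\gamma+\beta)(\delta)_p}{\Gamma(\beta)(\rho)_q}$ is positive — which follows from the standing hypotheses ${Re}(\gamma)>0$, ${Re}(\beta)>0$, ${Re}(\rho)>0$, ${Re}(\delta)>0$, ${Re}(\gamma)+p\geq q$, since these force the relevant Gamma values and Pochhammer symbols to be positive reals. Once that is settled, everything else is a transcription of the standard proof, so I would keep the write-up short: state the reduction, cite the classical mean value theorem for integrals, and invoke the intermediate value theorem to place $\xi$ strictly inside $(a,b)$.
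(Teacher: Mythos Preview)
Your approach is essentially identical to the paper's: both multiply the pointwise bounds $mg(x)\le f(x)g(x)\le Mg(x)$ by the positive weight and integrate (you package this as a citation of the classical first mean value theorem for integrals, while the paper writes the inequality chain out explicitly), and both then invoke the intermediate value theorem for the continuous case. One caveat: in the paper's own proof $\xi$ is taken as a \emph{value} in $[m,M]$, not a point in $(a,b)$ --- the ``$\xi\in(a,b)$'' in the statement is a slip --- so your ``upgrade $\mu\in[m,M]$ to $\xi\in(a,b)$'' step conflates Eq.~\eqref{bcn} with the second conclusion about $x_0$, but the underlying mathematics is the same.
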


\begin{proof} Let $m=\inf f$, $M=\sup f$ and $g(x)\geq 0$ in $[a,b]$. Then, we have
\begin{equation}\label{klm}
mg\left( x\right) <f\left( x\right) g\left( x\right) <Mg\left( x\right) .
\end{equation}

Multiplying by $\displaystyle\frac{\Gamma \left( \gamma +\beta \right) \left( \delta \right) _{p}}{\Gamma \left( \beta \right) \left( \rho \right) _{q}x^{1-\alpha }}$ on both sides of {\rm Eq.(\ref{klm})} and integrating with respect to the variable $x$ on $(a,b)$, we get
\begin{equation}\label{zezinho}
m\int_{a}^{b}\frac{\Gamma \left( \gamma +\beta \right) \left( \delta \right)
_{p}}{\Gamma \left( \beta \right) \left( \rho \right) _{q}}\frac{g\left(
x\right) }{x^{1-\alpha }}dx<\int_{a}^{b}\frac{\Gamma \left( \gamma +\beta
\right) \left( \delta \right) _{p}}{\Gamma \left( \beta \right) \left( \rho
\right) _{q}}\frac{f\left( x\right) g\left( x\right) }{x^{1-\alpha }}%
dx<M\int_{a}^{b}\frac{\Gamma \left( \gamma +\beta \right) \left( \delta
\right) _{p}}{\Gamma \left( \beta \right) \left( \rho \right) _{q}}\frac{%
g\left( x\right) }{x^{1-\alpha }}dx.
\end{equation}

Then, $\exists\xi\in[m,M]$, such that
\begin{equation*}
\int_{a}^{b}f\left( x\right) g\left( x\right) d_{\omega }x=\xi
\int_{a}^{b}g\left( x\right) d_{\omega }x,
\end{equation*}
with $d_{\omega }x=\displaystyle\frac{\Gamma \left( \gamma +\beta \right) \left( \delta
\right) _{p}}{\Gamma \left( \beta \right) \left( \rho \right) _{q}}\frac{dx}{
x^{1-\alpha }}$.

Note that, when $g(x)<0$, the proof is performed in an analogous way.

Moreover, by the intermediate value theorem, $f$ reaches each value in the interval $[m,M]$, then to $x_{0}$ in $[a,b]$, $f(x_{0})=\xi$. So, we have

\begin{equation*}
\int_{a}^{b}f\left( x\right) g\left( x\right) d_{\omega }x=f\left(
x_{0}\right) \int_{a}^{b}g\left( x\right) d_{\omega }x.
\end{equation*}

If $g(x)=0$, {\rm Eq.(\ref{bcn})} becomes obvious and if $g(x)>0$, then, {\rm Eq.(\ref{zezinho})} implies
\begin{equation}
m<\frac{\int_{a}^{b}f\left( x\right) g\left( x\right) d_{\omega }x}{%
\int_{a}^{b}g\left( x\right) d_{\omega }x}<M,
\end{equation}
exist a point $x_{0}\in(a,b)$ such that $m<f\left( x_{0}\right) <M$, the result follows.

In particular, when $g(x)=1$, by {\rm Theorem \ref{5a}}, we have the result
\begin{eqnarray*}
\int_{a}^{b}f\left( x\right) d_{\omega }x &=&f\left( x_{0}\right) \frac{%
\Gamma \left( \gamma +\beta \right) \left( \delta \right) _{p}}{\Gamma
\left( \beta \right) \left( \rho \right) _{q}}\int_{a}^{b}\frac{1}{%
x^{1-\alpha }}dx  \notag \\
&=&f\left( x_{0}\right) \frac{\Gamma \left( \gamma +\beta \right) \left(
\delta \right) _{p}}{\Gamma \left( \beta \right) \left( \rho \right) _{q}}%
\left( \frac{b^{\alpha }}{\alpha }-\frac{a^{\alpha }}{\alpha }\right),
\end{eqnarray*}
or in the following form
\begin{eqnarray}\label{vc}
f\left( x_{0}\right)  &=&\frac{\Gamma \left( \beta \right) \left( \rho
\right) _{q}}{\Gamma \left( \gamma +\beta \right) \left( \delta \right) _{p}}%
\frac{1}{\left( \frac{b^{\alpha }}{\alpha }-\frac{a^{\alpha }}{\alpha }%
\right) }\int_{a}^{b}f\left( x\right) d_{\omega }x  \notag \\
&=&\frac{1}{\left( \frac{b^{\alpha }}{\alpha }-\frac{a^{\alpha }}{\alpha }%
\right) }\int_{a}^{b}\frac{f\left( x\right) }{x^{1-\alpha }}dx.
\end{eqnarray}

{\rm Eq.(\ref{vc})} is the so-called average value of the $f$ function. When $\alpha=1$, we have the mean value theorem for integrals
\begin{equation*}
\int_{a}^{b}f\left( x\right) dx=\left( b-a\right) f\left( x_{0}\right).
\end{equation*}
\end{proof}

To conclude this section, we will present below some relations of the $\mathcal{V}$-fractional integral {\rm Eq.(\ref{A15})} with other fractional integrals.

Choosing $\beta=\rho=\delta=p=q=1$ and substituting in {\rm Eq.(\ref{A15})}, we have
$_{a}^{1}\mathcal{I}_{\gamma ,1,\alpha }^{1,1,1}f\left( t\right) =\Gamma \left( \gamma
+1\right) \displaystyle\int_{a}^{t}\frac{f\left( x\right) }{x^{1-\alpha }}dx=\;_{M}\mathcal{I}_{a}^{\gamma ,\alpha }f\left( t\right) $, which is exactly the $M$-fractional integral, recently introduced by Sousa and Capelas \cite{JEC1,JEC}.

On the order hand, for $\gamma=1$, we conclude $_{a}^{1}\mathcal{I}_{1,1,\alpha }^{1,1,1}f\left( t\right) =\displaystyle\int_{a}^{t}\frac{f\left(x\right) }{x^{1-\alpha }}dx=I_{a}^{\alpha }f\left( t\right) $, exactly the $\alpha$-fractional integral recently introduced by Khalil et al. \cite{KRHA} and Katugampola \cite{UNT2}.
\section{Derivative and integral $\mathcal{V}$-fractional of a Mittag-Leffler function}

Mittag-Leffler functions are important in the theory of fractional calculus and are present in the solutions of fractional differential equations. There are several applications with such functions, from fractional derivative definitions to Laplace transform and derivative calculus. Next, we calculate the $\mathcal{V}$-fractional derivative of two parameters Mittag-Leffler function, {\rm Eq.(\ref{def2})}.

\begin{theorem} Let $_{i}^{\rho }\mathcal{V}_{\gamma ,\beta ,\alpha }^{\delta ,p,q}f(t)$ the truncated $\mathcal{V}$-fractional derivative of order $\alpha$ and $\mathbb{E}_{\mu,\kappa}(\cdot)$ the two parameters Mittag-Leffler function {\rm Eq.(\ref{def2})}. Then, we have
\begin{equation*}
_{i}^{\rho }\mathcal{V}_{\gamma ,\beta ,\alpha}^{\delta ,p,q}\left( \mathbb{E}_{\mu
,\kappa }\left( t\right) \right) =t^{1-\alpha }\frac{\Gamma \left( \beta
\right) \left( \rho \right) _{q}}{\Gamma \left( \gamma +\beta \right) \left(
\delta \right) _{p}}\mathbb{E}_{\mu ,\mu +\kappa }^{2}\left( t\right) ,
\end{equation*}
where $\mathbb{E}_{\mu,\kappa}^{\rho}(\cdot)$ is the three parameters Mittag-Leffler function.
\end{theorem}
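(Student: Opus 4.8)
The plan is to reduce the statement to item 5 of Theorem~\ref{teo2} (the chain rule), which applies because the two parameters Mittag-Leffler function $\mathbb{E}_{\mu,\kappa}$ is entire and hence differentiable on $(0,\infty)$. That item gives at once
\begin{equation*}
_{i}^{\rho }\mathcal{V}_{\gamma ,\beta ,\alpha}^{\delta ,p,q}\left( \mathbb{E}_{\mu,\kappa}\left( t\right)\right)=\frac{t^{1-\alpha }\Gamma \left( \beta \right) \left( \rho \right) _{q}}{\Gamma\left( \gamma +\beta \right) \left( \delta \right) _{p}}\,\frac{d}{dt}\mathbb{E}_{\mu,\kappa}\left( t\right),
\end{equation*}
so the whole problem collapses to computing the ordinary derivative $\frac{d}{dt}\mathbb{E}_{\mu,\kappa}(t)$ and recognizing the outcome as a three parameters (Prabhakar) Mittag-Leffler function.

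First I would differentiate the defining series \eqref{A2} term by term -- legitimate since the power series has infinite radius of convergence and thus converges uniformly on compact subsets of $\mathbb{R}$ -- obtaining $\frac{d}{dt}\mathbb{E}_{\mu,\kappa}(t)=\sum_{k=1}^{\infty}\frac{k\,t^{k-1}}{\Gamma(\mu k+\kappa)}$. Reindexing by $k\mapsto k+1$ and using $\Gamma(\mu(k+1)+\kappa)=\Gamma(\mu k+(\mu+\kappa))$ rewrites this as $\sum_{k=0}^{\infty}\frac{(k+1)\,t^{k}}{\Gamma(\mu k+\mu+\kappa)}$.

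The last step is to identify this series with \eqref{A3}. Using $(\rho)_{k}=\Gamma(\rho+k)/\Gamma(\rho)$ from \eqref{A4}, the choice $\rho=2$ gives $(2)_{k}=(k+1)!$, hence $(2)_{k}/k!=k+1$, so that
\begin{equation*}
\sum_{k=0}^{\infty}\frac{(k+1)\,t^{k}}{\Gamma(\mu k+\mu+\kappa)}=\sum_{k=0}^{\infty}\frac{(2)_{k}}{k!}\frac{t^{k}}{\Gamma(\mu k+(\mu+\kappa))}=\mathbb{E}_{\mu,\mu+\kappa}^{2}(t).
\end{equation*}
Substituting this into the chain-rule identity above yields the asserted formula. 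The computation is routine; the only points deserving a word of justification are the term-by-term differentiation (uniform convergence) and the gamma/Pochhammer bookkeeping that simultaneously shifts the second parameter from $\kappa$ to $\mu+\kappa$ and produces exactly the numerator parameter $\rho=2$. There is no genuine obstacle -- the content of the theorem is essentially the identity $\mathbb{E}_{\mu,\kappa}'=\mathbb{E}_{\mu,\mu+\kappa}^{2}$ fed into the already-proved chain rule.
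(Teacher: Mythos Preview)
Your proof is correct and follows essentially the same approach as the paper: apply the chain rule from Theorem~\ref{teo2}(5), differentiate the power series term by term, reindex $k\mapsto k+1$, and recognize the result as $\mathbb{E}_{\mu,\mu+\kappa}^{2}(t)$ via the Pochhammer identity $(2)_k/k!=k+1$. If anything, you are slightly more careful than the paper in justifying the term-by-term differentiation.
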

\begin{proof}
In fact, using the chain rule and the two parameters Mittag-Leffler function, we have

\begin{eqnarray}\label{PO1}
_{i}^{\rho }\mathcal{V}_{\gamma ,\beta ,\alpha }^{\delta ,p,q}\left( \mathbb{E}_{\mu
,\kappa }\left( t\right) \right)  &=&t^{1-\alpha }\frac{\Gamma \left( \beta
\right) \left( \rho \right) _{q}}{\Gamma \left( \gamma +\beta \right) \left(
\delta \right) _{p}}\frac{d}{dt}\left( \overset{\infty }{\underset{k=0}{\sum 
}}\frac{t^{k}}{\Gamma \left( \mu k+\kappa \right) }\right)   \notag \\
&=&t^{1-\alpha }\frac{\Gamma \left( \beta \right) \left( \rho \right) _{q}}{%
\Gamma \left( \gamma +\beta \right) \left( \delta \right) _{p}}\overset{%
\infty }{\underset{k=0}{\sum }}\frac{k\left( k-1\right) !}{\left( k-1\right)
!}\frac{t^{k-1}}{\Gamma \left( \mu k+\kappa \right) }.
\end{eqnarray}

Exchanging the index $k \rightarrow k+1$, in {\rm Eq.(\ref{PO1})}, we have
\begin{eqnarray*}
_{i}^{\rho }\mathcal{V}_{\gamma ,\beta ,\alpha}^{\delta ,p,q}\left( \mathbb{E}_{\mu
,\kappa }\left( t\right) \right)  &=&t^{1-\alpha }\frac{\Gamma \left( \beta
\right) \left( \rho \right) _{q}}{\Gamma \left( \gamma +\beta \right) \left(
\delta \right) _{p}}\overset{\infty }{\underset{k=0}{\sum }}\frac{\left(
k+1\right) !}{k!}\frac{t^{k}}{\Gamma \left( \mu k+\mu +\kappa \right) } 
\notag \\
&=&t^{1-\alpha }\frac{\Gamma \left( \beta \right) \left( \rho \right) _{q}}{%
\Gamma \left( \gamma +\beta \right) \left( \delta \right) _{p}}\mathbb{E}_{\mu ,\mu
+\kappa }^{2}\left( t\right).
\end{eqnarray*}
\end{proof}

\begin{theorem} Let $_{i}^{\rho }\mathcal{V}_{\gamma ,\beta ,\alpha }^{\delta ,p,q;n}f(t)$ the truncated $\mathcal{V}$-fractional derivative of order $\alpha$ and $\mathbb{E}_{\mu,\kappa}(\cdot)$ the two parameters Mittag-Leffler function. Then, we have
\begin{equation}
_{i}^{\rho }\mathcal{V}_{\gamma ,\beta ,\alpha }^{\delta ,p,q;n}\left( \mathbb{E}_{\mu ,\kappa }\left( t\right) \right) =t^{1-\alpha }\frac{\Gamma\left( \beta \right) \left( \rho \right) _{q}}{\Gamma \left( \gamma +\beta\right) \left( \delta \right) _{p}}\Gamma \left( n+2\right) \mathbb{E}_{\mu,\kappa +\mu \left( n+1\right) }^{n+2}\left( t\right) ,
\end{equation}
with $n=0,1,2,...$.
\end{theorem}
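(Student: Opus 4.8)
The plan is to reduce the $n$-th order truncated $\mathcal{V}$-fractional derivative to an ordinary $(n+1)$-th order derivative via the identity established right after Definition \ref{def8}, namely
\begin{equation*}
_{i}^{\rho }\mathcal{V}_{\gamma ,\beta ,\alpha }^{\delta ,p,q;n}f\left(t\right) =\frac{t^{n+1-\alpha }\Gamma \left( \beta \right) \left( \rho\right) _{q}}{\Gamma \left( \gamma +\beta \right) \left( \delta \right) _{p}}f^{\left( n+1\right) }\left( t\right).
\end{equation*}
Applying this with $f(t)=\mathbb{E}_{\mu,\kappa}(t)=\sum_{k=0}^{\infty} t^{k}/\Gamma(\mu k+\kappa)$ turns the problem into computing the $(n+1)$-th ordinary derivative of this power series term by term, which is routine. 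So the first step is simply to write down this reduction.

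Next I would differentiate the series $n+1$ times: since $\frac{d^{n+1}}{dt^{n+1}} t^{k} = \frac{k!}{(k-n-1)!}t^{k-n-1}$ for $k\geq n+1$ and $0$ otherwise, we get
\begin{equation*}
\frac{d^{n+1}}{dt^{n+1}}\mathbb{E}_{\mu,\kappa}(t)=\sum_{k=n+1}^{\infty}\frac{k!}{(k-n-1)!}\frac{t^{k-n-1}}{\Gamma(\mu k+\kappa)}.
\end{equation*}
Then I would reindex with $k\mapsto k+n+1$ to obtain $\sum_{k=0}^{\infty}\frac{(k+n+1)!}{k!}\frac{t^{k}}{\Gamma(\mu k+\mu(n+1)+\kappa)}$. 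The remaining task is to recognize this as a Prabhakar-type three parameters Mittag-Leffler function: using the Pochhammer identity $(\rho)_k = \Gamma(\rho+k)/\Gamma(\rho)$ from Eq.(\ref{A4}), one checks that $\frac{(k+n+1)!}{k!} = \frac{\Gamma(k+n+2)}{\Gamma(k+1)} = \frac{\Gamma(n+2)(n+2)_k}{k!}$, so the sum equals $\Gamma(n+2)\sum_{k=0}^{\infty}\frac{(n+2)_k}{k!}\frac{t^k}{\Gamma(\mu k+\kappa+\mu(n+1))} = \Gamma(n+2)\,\mathbb{E}_{\mu,\kappa+\mu(n+1)}^{n+2}(t)$ by Definition \ref{def3}. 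Substituting back and noting $t^{n+1-\alpha}\cdot t^{-(n+1)}$ does not appear here because the ordinary derivative already produced $t^{k}$ after reindexing — more precisely, combining the factor $t^{n+1-\alpha}$ from the reduction with the series in $t^{k-n-1}$ before reindexing, or equivalently writing the reduction at the level of the already-differentiated series, yields the factor $t^{1-\alpha}$ in the final answer.

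The only genuine subtlety — and the step I would be most careful about — is the bookkeeping of powers of $t$: one must track that $_{i}^{\rho }\mathcal{V}_{\gamma ,\beta ,\alpha }^{\delta ,p,q;n}$ carries $t^{n+1-\alpha}$ while the $(n+1)$-th derivative lowers the series degree by $n+1$, so that after reindexing the net prefactor is exactly $t^{1-\alpha}$, matching the claimed formula (and consistent with the $n=0$ case proved in the previous theorem, where $\mathbb{E}_{\mu,\mu+\kappa}^{2}(t)$ with $\Gamma(2)=1$ is recovered). Since convergence of $\mathbb{E}_{\mu,\kappa}$ and its term-by-term differentiability are standard for $\operatorname{Re}(\mu)>0$, no further justification is needed, and the proof is essentially a one-line reduction followed by a reindexing.
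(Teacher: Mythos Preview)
Your reduction to the ordinary $(n+1)$-th derivative via the identity after Definition~\ref{def8} and your direct term-by-term differentiation of the series are both correct, and this is essentially the paper's approach as well; the only cosmetic difference is that the paper quotes the known formula $\tfrac{d^{n}}{dt^{n}}\mathbb{E}_{\mu,\kappa}^{\rho,q}(t)=(\rho)_{qn}\mathbb{E}_{\mu,\kappa+\mu n}^{\rho+qn,q}(t)$ and specializes it, whereas you rederive the needed instance by hand.

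However, your final paragraph on the ``bookkeeping of powers of $t$'' contains a genuine error. After reindexing, the $(n+1)$-th derivative is exactly $\Gamma(n+2)\,\mathbb{E}_{\mu,\kappa+\mu(n+1)}^{n+2}(t)$, a power series starting at $t^{0}$. Multiplying by the prefactor $t^{n+1-\alpha}$ from the reduction formula therefore gives
\[
_{i}^{\rho }\mathcal{V}_{\gamma ,\beta ,\alpha }^{\delta ,p,q;n}\left( \mathbb{E}_{\mu ,\kappa }(t)\right)
= t^{\,n+1-\alpha}\,\frac{\Gamma(\beta)(\rho)_{q}}{\Gamma(\gamma+\beta)(\delta)_{p}}\,\Gamma(n+2)\,\mathbb{E}_{\mu,\kappa+\mu(n+1)}^{n+2}(t),
\]
with exponent $n+1-\alpha$, \emph{not} $1-\alpha$. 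Your argument that ``the $(n+1)$-th derivative lowers the series degree by $n+1$, so the net prefactor is $t^{1-\alpha}$'' double-counts: the degree drop is already absorbed when you identify the reindexed series with $\mathbb{E}_{\mu,\kappa+\mu(n+1)}^{n+2}(t)$, so there is nothing left to cancel against $t^{n+1-\alpha}$. In fact the paper's own proof concludes with $t^{n+1-\alpha}$, so the exponent $1-\alpha$ in the displayed statement is a typo that you have been trying (unsuccessfully, since it is impossible) to justify. The $n=0$ sanity check you invoke is not diagnostic here precisely because $n+1-\alpha=1-\alpha$ in that case.
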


\begin{proof}
Consider the following result {\rm \cite{TGS}}
\begin{equation}\label{H12}
\frac{d^{n}}{dt^{n}}\left( \mathbb{E}_{\mu ,\kappa }^{\rho ,q}\left(
t\right) \right) =\left( \rho \right) _{qn}\mathbb{E}_{\mu ,\kappa +\mu n}^{\rho +qn,q}\left( t\right) ,
\end{equation}
where $\mathbb{E}^{\rho,q}_{\mu,\kappa}(\cdot)$ is the four parameters Mittag-Leffler function.

In particular, for $\rho =q=1$ in {\rm Eq.(\ref{H12})}, we have
\begin{equation}\label{H13}
\frac{d^{n}}{dt^{n}}\left( \mathbb{E}_{\mu ,\kappa }\left( t\right) \right)=\Gamma \left( n+1\right) \mathbb{E}_{\mu ,\kappa +\mu n}^{n+1}\left(t\right) ,
\end{equation}
where $\mathbb{E}^{\rho}_{\mu,\kappa}(\cdot)$ is the three parameters Mittag-Leffler function.

Taking the entire order derivative on both sides of the {\rm Eq.(\ref{H13})} and choosing $q=n=1$ in {\rm Eq.(\ref{H12})},
\begin{equation*}
\frac{d}{dt}\left( \mathbb{E}_{\mu ,\kappa +\mu n}^{n+1}\left( t\right)\right) =\frac{\Gamma \left( n+2\right) }{\Gamma \left( n+1\right) }\mathbb{E}_{\mu ,\kappa +\mu \left( n+1\right) }^{n+2}\left( t\right),
\end{equation*}
we get
\begin{equation}\label{H14}
\frac{d^{n+1}}{dt^{n+1}}\left( \mathbb{E}_{\mu ,\kappa }\left( t\right) \right) =\Gamma \left( n+2\right) \mathbb{E}_{\mu ,\kappa +\mu \left( n+1\right) }^{n+2}\left( t\right) .
\end{equation}

Using the chain rule and {\rm Eq.(\ref{H14})}, we conclude
\begin{eqnarray*}
_{i}^{\rho }\mathcal{V}_{\gamma ,\beta ,\alpha }^{\delta ,p,q;n}\left( \mathbb{E}_{\mu ,\kappa }\left( t\right) \right)  &=&t^{n+1-\alpha }\frac{\Gamma \left( \beta \right) \left( \rho \right) _{q}}{\Gamma \left( \gamma +\beta \right) \left( \delta \right) _{p}}\frac{d^{n+1}}{dt^{n+1}}\left(  \mathbb{E}_{\mu ,\kappa }\left( t\right) \right)   \notag \\ 
&=&t^{n+1-\alpha }\frac{\Gamma \left( \beta \right) \left( \rho \right) _{q}}{\Gamma \left( \gamma +\beta \right) \left( \delta \right) _{p}}\Gamma \left( n+2\right) \mathbb{E}_{\mu ,\kappa +\mu \left( n+1\right) }^{n+2}\left( t\right) .
\end{eqnarray*}
\end{proof}

Let us now calculate the $\mathcal{V}$-fractional integrate of the two parameters Mittag-Leffler function.

\begin{theorem} Let $_{a}^{\rho }\mathcal{I}_{\gamma ,\beta ,\alpha }^{\delta ,p,q}f(t)$ the $\mathcal{V}$-fractional integral of order $\alpha$ and $\mathbb{E}_{\mu,\kappa}(\cdot)$ the two parameters Mittag-Leffler function. Then, we have
\begin{equation*}
_{a}^{\rho }\mathcal{I}_{\gamma ,\beta ,\alpha }^{\delta ,p,q}\left( \mathbb{E}_{\mu
,\kappa }\left( t\right) \right) =\frac{\Gamma \left( \gamma +\beta \right)
\left( \delta \right) _{p}}{\Gamma \left( \beta \right) \left( \rho \right)
_{q}}\left( t^{\alpha }\mathbb{E}_{\mu +1,\kappa +\alpha +1}\left( t\right)
-a^{\alpha }\mathbb{E}_{\mu +1,\kappa +\alpha +1}\left( a\right) \right) .
\end{equation*}
\end{theorem}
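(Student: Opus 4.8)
The plan is to argue exactly as in the computations carried out just before this statement: reduce the $\mathcal{V}$-fractional integral to an ordinary Riemann integral via Definition~\ref{def9}, substitute the power series of the two parameters Mittag-Leffler function, and integrate term by term. Writing $C:=\frac{\Gamma(\gamma+\beta)(\delta)_p}{\Gamma(\beta)(\rho)_q}$, Eq.~(\ref{A15}) gives
\[
_{a}^{\rho }\mathcal{I}_{\gamma ,\beta ,\alpha }^{\delta ,p,q}\bigl(\mathbb{E}_{\mu,\kappa}(t)\bigr)=C\int_a^t \frac{\mathbb{E}_{\mu,\kappa}(x)}{x^{1-\alpha}}\,dx=C\int_a^t x^{\alpha-1}\sum_{k=0}^{\infty}\frac{x^k}{\Gamma(\mu k+\kappa)}\,dx .
\]

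First I would justify interchanging the integral and the series: the one-variable Mittag-Leffler function is entire, so its series converges uniformly on the compact interval $[a,t]$, and multiplying by the fixed factor $x^{\alpha-1}$ (which is integrable near $0$ because $\alpha>0$) does not spoil this, so term-by-term integration is permitted. Each term is then elementary, $\int_a^t x^{k+\alpha-1}\,dx=\frac{t^{k+\alpha}-a^{k+\alpha}}{k+\alpha}$, and separating the two endpoints yields
\[
_{a}^{\rho }\mathcal{I}_{\gamma ,\beta ,\alpha }^{\delta ,p,q}\bigl(\mathbb{E}_{\mu,\kappa}(t)\bigr)=C\left(\,\sum_{k=0}^{\infty}\frac{t^{k+\alpha}}{(k+\alpha)\,\Gamma(\mu k+\kappa)}-\sum_{k=0}^{\infty}\frac{a^{k+\alpha}}{(k+\alpha)\,\Gamma(\mu k+\kappa)}\right).
\]

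The decisive step is to recast each of these two series as a shifted two parameters Mittag-Leffler function. After pulling out $t^{\alpha}$ (respectively $a^{\alpha}$) one must identify $\sum_{k\ge 0}\frac{t^{k}}{(k+\alpha)\,\Gamma(\mu k+\kappa)}$ with $\mathbb{E}_{\mu+1,\kappa+\alpha+1}(t)=\sum_{k\ge 0}\frac{t^{k}}{\Gamma((\mu+1)k+\kappa+\alpha+1)}$, i.e.\ match the coefficients $\frac{1}{(k+\alpha)\Gamma(\mu k+\kappa)}$ and $\frac{1}{\Gamma((\mu+1)k+\kappa+\alpha+1)}$ index by index, using the functional equation of $\Gamma$ to absorb the factor $1/(k+\alpha)$. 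I expect this bookkeeping with the gamma factors to be the main obstacle of the proof; once it is settled, substituting back and collecting the prefactor $C$ gives the claimed identity, and the special case $\alpha\to 1$, $a\to 0$ reproduces the familiar antiderivative $\int_0^{t}\mathbb{E}_{\mu,\kappa}(x)\,dx$, which serves as a consistency check.
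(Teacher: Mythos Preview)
Your plan is exactly the paper's proof: apply Definition~\ref{def9}, insert the series for $\mathbb{E}_{\mu,\kappa}$, integrate $x^{k+\alpha-1}$ term by term, and then rewrite each resulting series as $t^{\alpha}\mathbb{E}_{\mu+1,\kappa+\alpha+1}(t)$ and $a^{\alpha}\mathbb{E}_{\mu+1,\kappa+\alpha+1}(a)$. The paper carries out precisely these steps, with the last identification asserted in one line.

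That said, your instinct that the ``gamma bookkeeping'' is the real obstacle is well founded, and it cannot be resolved in the way you (and the paper) suggest. The required coefficient identity is
\[
\frac{1}{(k+\alpha)\,\Gamma(\mu k+\kappa)} \;=\; \frac{1}{\Gamma\bigl((\mu+1)k+\kappa+\alpha+1\bigr)},
\]
and this does \emph{not} follow from the functional equation $\Gamma(z+1)=z\,\Gamma(z)$. Already at $k=0$ it would demand $\alpha\,\Gamma(\kappa)=\Gamma(\kappa+\alpha+1)$, which fails for generic $\alpha,\kappa$; for $k\ge 1$ the two gamma arguments $\mu k+\kappa$ and $(\mu+1)k+\kappa+\alpha+1$ differ by $k+\alpha+1$, not by $1$, so a single application of the functional equation cannot absorb the factor $1/(k+\alpha)$. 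In other words, the series $\sum_{k\ge 0}\frac{t^{k}}{(k+\alpha)\Gamma(\mu k+\kappa)}$ is not, in general, a two-parameter Mittag-Leffler function with the stated parameters. So your approach matches the paper's line for line, but the step you flagged as the main obstacle is a genuine gap in both; you should not expect to be able to ``settle'' it as written.
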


\begin{proof} In fact, using the definition of $\mathcal{V}$-fractional integral {\rm Eq.(\ref{A15})} and the fundamental theorem of calculus, we have
\begin{eqnarray}\label{josy}
_{a}^{\rho }\mathcal{I}_{\gamma ,\beta ,\alpha }^{\delta ,p,q}\left( \mathbb{%
E}_{\mu ,\kappa }\left( t\right) \right)  &=&\frac{\Gamma \left( \gamma
+\beta \right) \left( \delta \right) _{p}}{\Gamma \left( \beta \right)
\left( \rho \right) _{q}}\int_{a}^{t}x^{\alpha -1}\underset{k=0}{\overset{%
\infty }{\sum }}\frac{x^{k}}{\Gamma \left( \mu k+\kappa \right) }dx \notag \\
&=&\frac{\Gamma \left( \gamma +\beta \right) \left( \delta \right) _{p}}{%
\Gamma \left( \beta \right) \left( \rho \right) _{q}}\underset{k=0}{\overset{%
\infty }{\sum }}\frac{1}{\Gamma \left( \mu k+\kappa \right) }%
\int_{a}^{t}x^{\alpha +k-1}dx \notag \\
&=&\frac{\Gamma \left( \gamma +\beta \right) \left( \delta \right) _{p}}{%
\Gamma \left( \beta \right) \left( \rho \right) _{q}}\underset{k=0}{\overset{%
\infty }{\sum }}\frac{1}{\Gamma \left( \mu k+\kappa \right) }\left( \frac{%
t^{k+\alpha }}{k+\alpha }-\frac{a^{k+\alpha }}{k+\alpha }\right) \notag \\
&=&\frac{\Gamma \left( \gamma +\beta \right) \left( \delta \right) _{p}}{%
\Gamma \left( \beta \right) \left( \rho \right) _{q}}\left( 
\begin{array}{c}
t^{\alpha }\underset{k=0}{\overset{\infty }{\sum }}\frac{t^{k}}{\Gamma
\left( \left( \mu +1\right) k+\kappa +\alpha +1\right) } \\ 
-a^{\alpha }\underset{k=0}{\overset{\infty }{\sum }}\frac{a^{k}}{\Gamma
\left( \left( \mu +1\right) k+\kappa +\alpha +1\right) }%
\end{array}%
\right)  \\
&=&\frac{\Gamma \left( \gamma +\beta \right) \left( \delta \right) _{p}}{%
\Gamma \left( \beta \right) \left( \rho \right) _{q}}\left( t^{\alpha }%
\mathbb{E}_{\mu +1,\kappa +\alpha +1}\left( t\right) -a^{\alpha }\mathbb{E}%
_{\mu +1,\kappa +\alpha +1}\left( a\right) \right). \notag
\end{eqnarray}
\end{proof}

In particular, taking the limit $a \rightarrow 0$, on both sides of Eq.(\ref{josy}), we have that the unique contributing factor is $k=0$,
\begin{equation}\label{josy1}
\underset{a\rightarrow 0}{\lim }\left( a^{\alpha }\underset{k=0}{\overset{\infty }{\sum }}\frac{a^{k}}{\Gamma \left( \left( \mu +1\right) k+\kappa
+\alpha +1\right) }\right) =\underset{a\rightarrow 0}{\lim }a^{\alpha }\frac{1}{\Gamma \left( \kappa +\alpha +1\right) }=0.
\end{equation}

In this sense, from Eq.(\ref{josy}) and Eq.(\ref{josy1}), we conclude that
\begin{equation*}
_{0}^{\rho }\mathcal{I}_{\gamma ,\beta ,\alpha }^{\delta ,p,q}\left( \mathbb{%
E}_{\mu ,\kappa }\left( t\right) \right) =\frac{\Gamma \left( \gamma +\beta
\right) \left( \delta \right) _{p}}{\Gamma \left( \beta \right) \left( \rho
\right) _{q}}t^{\alpha }\mathbb{E}_{\mu +1,\kappa +\alpha +1}\left( t\right) .
\end{equation*}

\section{Relation with the derivative and integral of Riemann-Liouville }

In this section we present the demonstration of two theorems that relate the $\mathcal{V}$-fractional derivative with the fractional derivative in the Riemann-Liouville sense and the $\mathcal {V}$-fractional integral with Riemann-Liouville fractional integral.

\begin{theorem}\label{teo20} Let $_{0}^{\rho }\mathcal{I}_{\gamma ,\beta ,\alpha }^{\delta ,p,q}f(t)$ the $\mathcal{V}$-fractional integral of order $\alpha$, $0<\alpha<1$ with $a=0$ and the function $f(t)=(t-x)^{\mu}$, $t>x$ and $\mu>-1$. Then, we have
\begin{equation*}
_{0}^{\rho }\mathcal{I}_{\gamma ,\beta ,\alpha }^{\delta ,p,q}\left(
t-x\right) ^{\mu }=\frac{\Gamma \left( \gamma +\beta \right) \Gamma \left(
\alpha \right) \left( \delta \right) _{p}}{\Gamma \left( \beta \right)
\left( \rho \right) _{q}} J^{\alpha }t^{\mu }, 
\end{equation*}
where $J^{\alpha }t^{\mu } $ is the Riemann-Liouville fractional integral of order $\alpha$ {\rm \cite{AHMJ,IP}}.
\end{theorem}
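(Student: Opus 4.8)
\emph{Proof strategy.} The plan is to unwind Definition~\ref{def9} and reduce the whole computation to a Beta-function integral, exactly in the spirit of the Mittag-Leffler computations carried out above. Applying $_{0}^{\rho }\mathcal{I}_{\gamma ,\beta ,\alpha }^{\delta ,p,q}$ to $(t-x)^{\mu}$ with $a=0$ — where the operator acts in the argument variable, so that the upper limit of integration coincides with the $t$ appearing inside the power, producing the convolution structure that makes a Riemann-Liouville integral show up — I would first write
\[
_{0}^{\rho }\mathcal{I}_{\gamma ,\beta ,\alpha }^{\delta ,p,q}\left(t-x\right)^{\mu}
=\frac{\Gamma\left(\gamma+\beta\right)\left(\delta\right)_{p}}{\Gamma\left(\beta\right)\left(\rho\right)_{q}}\int_{0}^{t}\left(t-x\right)^{\mu}x^{\alpha-1}\,dx .
\]

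Next I would evaluate the integral by the change of variable $x=tu$, which turns it into $t^{\mu+\alpha}\int_{0}^{1}(1-u)^{\mu}u^{\alpha-1}\,du=t^{\mu+\alpha}B(\alpha,\mu+1)=t^{\mu+\alpha}\,\frac{\Gamma(\alpha)\,\Gamma(\mu+1)}{\Gamma(\alpha+\mu+1)}$; integrability at $u=0$ and $u=1$ is guaranteed by the standing hypotheses $0<\alpha<1$ and $\mu>-1$. Substituting this back gives
\[
_{0}^{\rho }\mathcal{I}_{\gamma ,\beta ,\alpha }^{\delta ,p,q}\left(t-x\right)^{\mu}
=\frac{\Gamma(\gamma+\beta)\left(\delta\right)_{p}}{\Gamma(\beta)\left(\rho\right)_{q}}\,\frac{\Gamma(\alpha)\,\Gamma(\mu+1)}{\Gamma(\alpha+\mu+1)}\,t^{\mu+\alpha}.
\]

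Finally I would invoke the classical evaluation $J^{\alpha}t^{\mu}=\frac{\Gamma(\mu+1)}{\Gamma(\mu+\alpha+1)}\,t^{\mu+\alpha}$ of the Riemann-Liouville fractional integral of a power function and substitute it into the previous line, which immediately yields
\[
_{0}^{\rho }\mathcal{I}_{\gamma ,\beta ,\alpha }^{\delta ,p,q}\left(t-x\right)^{\mu}
=\frac{\Gamma(\gamma+\beta)\,\Gamma(\alpha)\left(\delta\right)_{p}}{\Gamma(\beta)\left(\rho\right)_{q}}\,J^{\alpha}t^{\mu},
\]
as claimed. The argument is essentially a one-line Beta integral, so I do not anticipate any serious obstacle; the only point demanding attention is the variable bookkeeping — making explicit that the $\mathcal{V}$-fractional integral is applied in the convolution variable, so that the Beta integral (and hence $J^{\alpha}$) appears rather than a bare power integral — together with checking the endpoint integrability conditions.
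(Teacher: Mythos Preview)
Your proposal is correct and follows essentially the same route as the paper: apply Definition~\ref{def9} with $a=0$, substitute $x=tu$ to reduce the integral to the Beta function $B(\alpha,\mu+1)=\dfrac{\Gamma(\alpha)\Gamma(\mu+1)}{\Gamma(\mu+\alpha+1)}$, and then recognise the factor $\dfrac{\Gamma(\mu+1)}{\Gamma(\mu+\alpha+1)}t^{\mu+\alpha}$ as the Riemann--Liouville integral $J^{\alpha}t^{\mu}$. Your extra remarks on integrability at the endpoints and on the variable bookkeeping are welcome clarifications but do not alter the argument.
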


\begin{proof} In fact, by definition of the $\mathcal{V}$-fractional integral, we have
\begin{eqnarray}\label{T1}
_{0}^{\rho }\mathcal{I}_{\gamma ,\beta ,\alpha }^{\delta ,p,q}\left(
t-x\right) ^{\mu } &=&\frac{\Gamma \left( \gamma +\beta \right) \left(
\delta \right) _{p}}{\Gamma \left( \beta \right) \left( \rho \right) _{q}}%
\int_{0}^{t}t^{\mu }\left( 1-\frac{x}{t}\right) ^{\mu }x^{\alpha -1}dx 
\notag \\
&=&\frac{\Gamma \left( \gamma +\beta \right) \left( \delta \right) _{p}}{%
\Gamma \left( \beta \right) \left( \rho \right) _{q}}t^{\mu +\alpha
}\int_{0}^{1}\left( 1-u\right) ^{\mu }u^{\alpha -1}du  \notag \\
&=&\frac{\Gamma \left( \gamma +\beta \right) \left( \delta \right) _{p}}{%
\Gamma \left( \beta \right) \left( \rho \right) _{q}}t^{\mu +\alpha }\frac{%
\Gamma \left( \mu +1\right) \Gamma \left( \alpha \right) }{\Gamma \left( \mu
+1+\alpha \right) }.
\end{eqnarray}

Consider the following result {\rm\cite{RM}},
\begin{equation}\label{T2}
J^{\alpha }t^{\mu }=\frac{\Gamma \left( \mu +1\right) }{\Gamma \left( \mu
+1+\alpha \right) }t^{\mu +\alpha },\text{ }t>0\text{ }and\text{ }\mu >-1.
\end{equation}

Thus, from {\rm Eq.(\ref{T1})} and {\rm Eq.(\ref{T2})}, we conclude that
\begin{equation*}
_{0}^{\rho }\mathcal{I}_{\gamma ,\beta ,\alpha }^{\delta ,p,q}\left(
t-x\right) ^{\mu }=\frac{\Gamma \left( \gamma +\beta \right) \Gamma \left(
\alpha \right) \left( \delta \right) _{p}}{\Gamma \left( \beta \right)
\left( \rho \right) _{q}}J^{\alpha }t^{\mu },
\end{equation*}
where $J^{\alpha }(\cdot)$ is the Riemann-Liouville fractional integral of order $\alpha$.
\end{proof}

\begin{theorem} Let $_{0}^{\rho }\mathcal{I}_{\gamma ,\beta ,\alpha }^{\delta ,p,q}f(t)$ and $_{i}^{\rho }\mathcal{V}_{\gamma ,\beta ,\alpha }^{\delta ,p,q;n}f(t)$ the $\mathcal{V}$-fractional integral and derivative of order $\alpha$, $0<\alpha<1$ with $a=0$ and the function $f(t)=(t-x)^{\mu}$, $t>x$ and $\mu>-1$. Then, we have
\begin{equation*}
_{i}^{\rho }\mathcal{V}_{\gamma ,\beta ,\alpha }^{\delta ,p,q}\left(
_{0}^{\rho }\mathcal{I}_{\gamma ,\beta ,\alpha }^{\delta ,p,q}\left(
t-x\right) ^{\mu }\right) =\frac{\Gamma \left( \mu +1-\alpha \right) \Gamma
\left( \alpha \right) }{\Gamma \left( \mu +1+\alpha \right) }t^{\alpha
}\left( \mu +\alpha \right) \mathcal{D}^{\alpha }t^{\mu },
\end{equation*}
where $\mathcal{D}^{\alpha }t^{\mu }$ is the Riemann-Liouville fractional derivative of order $\alpha$ {\rm \cite{AHMJ,IP}}.
\end{theorem}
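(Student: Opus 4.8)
The plan is to chain the two facts already in hand: the closed form for the $\mathcal{V}$-fractional integral of $(t-x)^{\mu}$ obtained in Theorem \ref{teo20}, and the chain rule from item 5 of Theorem \ref{teo2}. First I would recall, from Eq.(\ref{T1}) inside the proof of Theorem \ref{teo20}, that
\begin{equation*}
_{0}^{\rho }\mathcal{I}_{\gamma ,\beta ,\alpha }^{\delta ,p,q}\left( t-x\right) ^{\mu }=\frac{\Gamma \left( \gamma +\beta \right) \left( \delta \right) _{p}}{\Gamma \left( \beta \right) \left( \rho \right) _{q}}\,\frac{\Gamma \left( \mu +1\right) \Gamma \left( \alpha \right) }{\Gamma \left( \mu +1+\alpha \right) }\,t^{\mu +\alpha },
\end{equation*}
which is a constant multiple of the monomial $t^{\mu+\alpha}$ and hence differentiable on $(0,\infty)$, so the hypothesis of the chain rule is met.

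Next I would apply $_{i}^{\rho }\mathcal{V}_{\gamma ,\beta ,\alpha }^{\delta ,p,q}$ to this expression. By item 5 of Theorem \ref{teo2}, $_{i}^{\rho }\mathcal{V}_{\gamma ,\beta ,\alpha }^{\delta ,p,q}g\left( t\right) =\frac{t^{1-\alpha }\Gamma \left( \beta \right) \left( \rho \right) _{q}}{\Gamma \left( \gamma +\beta \right) \left( \delta \right) _{p}}g'(t)$; differentiating $t^{\mu+\alpha}$ produces $(\mu+\alpha)t^{\mu+\alpha-1}$, the factor $\Gamma(\gamma+\beta)(\delta)_p/[\Gamma(\beta)(\rho)_q]$ coming from the integral cancels the reciprocal factor supplied by the chain rule, and $t^{1-\alpha}\cdot t^{\mu+\alpha-1}=t^{\mu}$. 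This gives
\begin{equation*}
_{i}^{\rho }\mathcal{V}_{\gamma ,\beta ,\alpha }^{\delta ,p,q}\left( _{0}^{\rho }\mathcal{I}_{\gamma ,\beta ,\alpha }^{\delta ,p,q}\left( t-x\right) ^{\mu }\right) =\frac{\Gamma \left( \mu +1\right) \Gamma \left( \alpha \right) }{\Gamma \left( \mu +1+\alpha \right) }\,\left( \mu +\alpha \right) t^{\mu }.
\end{equation*}

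Finally I would invoke the standard power rule for the Riemann--Liouville derivative, $\mathcal{D}^{\alpha }t^{\mu }=\frac{\Gamma \left( \mu +1\right) }{\Gamma \left( \mu +1-\alpha \right) }t^{\mu -\alpha }$ (for $\mu>-1$), which rearranges to $\Gamma(\mu+1)\,t^{\mu}=\Gamma(\mu+1-\alpha)\,t^{\alpha}\,\mathcal{D}^{\alpha}t^{\mu}$. Substituting this identity into the last display and regrouping the gamma factors yields exactly $\frac{\Gamma \left( \mu +1-\alpha \right) \Gamma \left( \alpha \right) }{\Gamma \left( \mu +1+\alpha \right) }t^{\alpha }\left( \mu +\alpha \right) \mathcal{D}^{\alpha }t^{\mu }$, which is the claim. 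I do not anticipate a genuine obstacle here: the only care needed is bookkeeping of the three gamma factors and a correct citation of the Riemann--Liouville power rule; the continuity/differentiability required by the chain rule is automatic since the $\mathcal{V}$-fractional integral of $(t-x)^{\mu}$ is a smooth monomial in $t$ on $(0,\infty)$.
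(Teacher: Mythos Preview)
Your proposal is correct and follows essentially the same approach as the paper: both use the closed form from Theorem~\ref{teo20} (Eq.~(\ref{T1})) for the $\mathcal{V}$-fractional integral, then apply the chain rule from item~5 of Theorem~\ref{teo2}, and finally invoke the Riemann--Liouville power rule $\mathcal{D}^{\alpha}t^{\mu}=\Gamma(\mu+1)t^{\mu-\alpha}/\Gamma(\mu+1-\alpha)$ to recast the resulting monomial. Your bookkeeping of the gamma factors and the cancellation of the $\Gamma(\gamma+\beta)(\delta)_p/[\Gamma(\beta)(\rho)_q]$ constants is in fact slightly more explicit than the paper's.
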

\begin{proof} Using the chain rule, and {\rm Theorem \ref {teo20}}, we have
\begin{eqnarray}\label{T3}
_{i}^{\rho }\mathcal{V}_{\gamma ,\beta ,\alpha }^{\delta ,p,q}\left(
_{0}^{\rho }\mathcal{I}_{\gamma ,\beta ,\alpha }^{\delta ,p,q}\left(
t-x\right) ^{\mu }\right)  &=&\frac{t^{1-\alpha }\Gamma \left( \beta \right)
\left( \rho \right) _{q}}{\Gamma \left( \gamma +\beta \right) \left( \delta
\right) _{p}}\frac{d}{dt}\left( _{0}^{\rho }\mathcal{I}_{\gamma ,\beta
,\alpha }^{\delta ,p,q}\left( t-x\right) ^{\mu }\right)   \notag \\
&=&t^{1-\alpha }\Gamma \left( \beta \right) \frac{d}{dt}\left( \frac{\Gamma
\left( \mu +1\right) }{\Gamma \left( \mu +1+\alpha \right) }t^{\mu +\alpha
}\right)   \notag \\
&=&t^{\alpha }\frac{\Gamma \left( \alpha \right) \Gamma \left( \mu +1\right) 
}{\Gamma \left( \mu +1+\alpha \right) }\left( \mu +\alpha \right) t^{\mu
-\alpha }.
\end{eqnarray}

Consider the following result {\rm \cite{RM}},
\begin{equation}\label{T4}
\mathcal{D}^{\alpha }t^{\mu }=\frac{\Gamma \left( \mu +1\right) }{\Gamma \left( \mu
+1-\alpha \right) }t^{\mu -\alpha },\text{ }t>0\text{ }e\text{ }\mu >-1.
\end{equation}

Thus, from {\rm Eq.(\ref{T3})} and {\rm Eq.(\ref{T4})}, we conclude that
\begin{equation*}
_{i}^{\rho }\mathcal{V}_{\gamma ,\beta ,\alpha }^{\delta ,p,q}\left(
_{0}^{\rho }\mathcal{I}_{\gamma ,\beta ,\alpha }^{\delta ,p,q}\left(
t-x\right) ^{\mu }\right) =\frac{\Gamma \left( \alpha \right) \Gamma \left(
\mu +1-\alpha \right) }{\Gamma \left( \mu +1+\alpha \right) }t^{\alpha
}\left( \mu +\alpha \right) \mathcal{D}^{\alpha }t^{\mu },
\end{equation*}
where $\mathcal{D}^{\alpha}t^{\mu}$ is the Riemann-Liouville fractional derivative.
\end{proof}


\section{Relation with other fractional derivatives}

In this section, we will discuss the relationship between the truncated $\mathcal{V}$-fractional derivative and the conformable fractional derivative, alternative fractional derivative and truncated alternative fractional derivative, as well as the $M$-fractional derivative and the truncated $M$-fractional derivative.

Recently Sousa and Oliveira \cite{JEC}, proposed the truncated $M$-fractional derivative given by
\begin{equation}\label{T5}
_{i}\mathscr{D}_{M}^{\gamma,\alpha }f\left( t\right) =\underset{\varepsilon
\rightarrow 0}{\lim }\frac{f\left( t\;_{i}\mathbb{E}_{\gamma }\left( \varepsilon t^{-\alpha }\right) \right) -f\left( t\right) }{\varepsilon },
\end{equation}
$\forall t>0$ and $_{i}\mathbb{E}_{\gamma }(\cdot)$, $\gamma>0$ is the one parameter truncated Mittag-Leffler function.

Here we consider the truncated function $_{i}H^{\rho,\delta,q}_{\gamma,\beta,p}(\cdot)$, which is defined in terms of the product of a gamma function and the six parameters truncated Mittag-Leffler function Eq.(\ref{A9}). Then, choosing $p=q=\delta=\rho=\beta=1$ in the definition of the truncated function $_{i}H^{\rho,\delta,q}_{\gamma,\beta,p}(\cdot)$, we have
\begin{equation}\label{T6}
_{i}H_{\gamma ,1,1}^{1,1,1}\left( \varepsilon t^{-\alpha }\right) =\overset{i%
}{\underset{k=0}{\sum }}\frac{\left( \varepsilon t^{-\alpha }\right) ^{k}}{%
\Gamma \left( \gamma k+1\right) }=\;_{i}\mathbb{E}_{\gamma }\left( \varepsilon
t^{-\alpha }\right) .
\end{equation}

Thus, from Eq.(\ref{A11}) and {\rm Eq.(\ref{T6})}, we conclude
\begin{eqnarray}
_{i}^{1}\mathcal{V}_{\gamma ,1,\alpha }^{1,1,1}f\left( t\right)  &=&\underset%
{\varepsilon \rightarrow 0}{\lim }\frac{f\left( t_{i}H_{\gamma
,1,1}^{1,1,1}\left( \varepsilon t^{-\alpha }\right) \right) -f\left(
t\right) }{\varepsilon }  \notag \\
&=&\underset{\varepsilon \rightarrow 0}{\lim }\frac{f\left( t\;_{i}\mathbb{E}%
_{\gamma }\left( \varepsilon t^{-\alpha }\right) \right) -f\left( t\right) }{%
\varepsilon }  \notag \\
&=&\;_{i}\mathscr{D}_{M}^{\gamma ,\alpha }f\left( t\right) ,
\end{eqnarray}
which is exactly the truncated $M$-fractional derivative {\rm Eq.(\ref{T5})}. Consequently, from the $M$-fractional derivative, we obtain a relation for other four existing fractional derivatives: conformable, alternative, truncated alternative, and $M$-fractional \cite{KRHA,UNT2,JEC1,JEC}.

The diagram below contains particular cases obtained from the truncated $\mathcal{V}$-fractional derivative, the conformable fractional derivative, the alternative fractional derivative, the truncated alternative fractional derivative, and the $M$-fractional derivative, besides to truncated $M$-fractional derivative.
\vspace*{1cm}

\begin{footnotesize}
\centering
\tikzstyle{block} = [rectangle, draw=blue, thick, fill=blue!20, align=center,
text width=16.5em, node distance=5.0cm, text centered, rounded corners,  minimum width=5cm, minimum height=5em]

\tikzstyle{block2} = 
    [rectangle, draw=blue, thick, fill=blue!20, align=center,
text width=8em, node distance=5.0cm, text centered, rounded corners,  minimum width=5cm, minimum height=5em]

\tikzstyle{block3} = 
    [rectangle, draw=blue, thick, fill=blue!20, align=center,
text width=8em, node distance=5.0cm, text centered, rounded corners,  minimum width=5cm, minimum height=5em]

\tikzstyle{line} = 
    [draw, -latex']

\begin{tikzpicture}[
        node distance=0mm,
    every node/.style = {shape=rectangle, rounded corners, draw=blue!50,
                   inner sep=2mm, outer sep=0mm, minimum height=8mm,
                   align=center, anchor=north},
    cmnt/.style = {shape=rectangle, draw=none,
                   inner sep=0mm, outer sep=0mm, minimum height=8mm,
                   align=right},
             level 1/.style = {sibling distance=42mm},
             level 2/.style = {sibling distance=34mm},
    edge from parent fork down,
    edge from parent/.style = {draw, semithick, -latex},                            ]
\begin{scope}[every node/.append style={top color=blue!10, bottom color=blue!30}]
    \node   (L1)   {Truncated $\mathcal{V}$-fractional derivative}
        child {node (L2) {Truncated $M$-fractional derivative}
            child {node (L3)  {M-fractional\\ derivative}}
            child {node {Conformable \\fractional\\ derivative}}
            child {node {Alternative\\ fractional\\ derivative}}
            child {node {Truncated \\alternative fractional\\ derivative}}
                }
                  ;
\end{scope}

        \end{tikzpicture}

\end{footnotesize}


\section{Concluding remarks}

We have introduced a new truncated $\mathcal{V}$-fractional derivative for $\alpha$-differentiable functions using the six parameters truncated Mittag-Leffler function and the gamma function and the corresponding $\mathcal {V}$-fractional integral.

For a class of $\alpha$-differentiable functions, in the context of fractional derivatives, we conclude that the truncated $\mathcal{V}$-fractional derivative, proposed here, behaves very well in relation to the classical properties of entire order calculus. In addition, it was possible through of truncated $\mathcal{V}$-fractional derivative and $\mathcal{V}$-fractional integral, to obtain relations with the fractional derivative and fractional integral in the Riemann-Liouville sense. In this sense, with our fractional derivative, it was possible to obtain a very important relation with the fractional derivatives mentioned, as seen in section 7.

We conclude that the results presented here contain as particular cases the derivatives proposed by Khalil et al. \cite{KRHA}, Katugampola \cite{UNT2} and Sousa and Oliveira \cite{JEC1,JEC}. As future works, we can introduce the truncated $k$-Mittag-Leffler function from the $k$-Mittag-Leffler function \cite{DGCRA,NKEA} and propose a generalization of fractional derivatives. Also, the presented results are related to the function of a variable, in this sense, we can propose a truncated $\mathcal{V}$-fractional derivative with $n$ real variables \cite{GNY}. Studies in this direction will be published \cite{JVE,JVE1}.

\bibliography{ref}
\bibliographystyle{plain}

\end{document}